\theoremstyle{plain}
\newtheorem{lem}{Lemma}
\newtheorem{thm}[lem]{Theorem}
\newtheorem{prop}[lem]{Proposition}
\newtheorem{cor}[lem]{Corollary}
\theoremstyle{definition}
\newtheorem{defn}[lem]{Definition}
\theoremstyle{remark}
\newtheorem{example}[lem]{Example}
\DeclareMathOperator{\mix}{mix}
\DeclareMathOperator{\diam}{diam}
\DeclareMathOperator{\Gap}{Gap}
\begin{document}

\title{A note on sampling graphical Markov models}
\author{Megan Bernstein}\thanks{School of Mathematics, Georgia Institute of Technology.
 Email: bernstein@math.gatech.edu. Supported in part by NSF Grant DMS-$1344199$}
\author{Prasad Tetali}\thanks{School of Mathematics and School of Computer Science, Georgia Institute of Technology. Email: tetali@math.gatech.edu. Supported in part by NSF Grant DMS-1407657.}
\begin{abstract}
We consider sampling and enumeration problems for Markov equivalence classes. We create and analyze a Markov chain for uniform random sampling on the DAGs inside a Markov equivalence class. Though the worst case is exponentially slow mixing, we find a condition on the Markov equivalence class for polynomial time mixing. We also investigate the ratio of Markov equivalence classes to DAGs and a Markov chain of He, Jia, and Yu for random sampling of sparse Markov equivalence classes.

Keywords: graphical Markov model; MCMC algorithm; reversible Markov chain
\end{abstract}

\maketitle

\section{Introduction}

A {\em Bayesian network} or {\em DAG model} is a type of statistical model used to capture a causal relationship in data. The model consists of a directed acyclic graph (DAG) and a set of (dependent) random variables, one variable assigned to each vertex. The DAG encodes conditional independence relations among the random variables. These models are used in areas ranging from computation biology to artificial intelligence~\cite{CompBio,MR1929415,MR2460892,MR2549555}. However, the correct DAG for a system can only be inferred from data up to a condition called {\em Markov equivalence}, where all DAGs in a Markov equivalence class represent the same statistical model~\cite{Heckerman:1995:LBN:218919.218921}. Model selection algorithms face a balance between dealing with the more complicated structure of Markov equivalence classes or encountering inefficiencies and constraints while using DAGs. Works (resulting in partial success) towards understanding Markov equivalence classes through counting and random sampling have considered the questions of enumeration of (1) the Markov equivalences classes on a given number of vertices~\cite{MR1935281,MR3127848,MR3112770,MR1997903}, (2) the DAGs comprising a fixed Markov equivalence class~\cite{MR2253771,2016arXiv161007921H}, or (3) all Markov equivalence classes corresponding to a fixed underlying undirected graph~\cite{2017arXiv170606091R,2016arXiv161107493R}. In spite of much research, the topic of exact enumeration remains stubbornly open. In the present work, we consider the problems of random sampling for questions (1) and (2). 

The graphs in a Markov equivalence class are exactly those that share the same skeleton and immoralities~\cite{MR1935281}: The {\em skeleton} of a directed graph is the underlying undirected graph obtained by removing direction from all the edges. A {\em $v$-structure} (also termed an {\em immorality}) at $c$ occurs among vertices $a,b,c$, whenever the induced subgraph on these vertices has the two directed edges $(a,c)$ and $(b,c)$ but not $(a,b)$ or $(b,a)$. 

An {\em essential graph} is a graphical representation of a Markov equivalence class that utilizes both directed and undirected edges. Since all graphs in a Markov equivalence class share the same skeleton, they only differ in direction of edges. An edge is directed in the essential graph for a Markov equivalence graph if that edge is directed in the same direction in all the graphs in the equivalence class. Otherwise it is undirected. The partially directed graphs (PDAG's) resulted from this are distinct.

A result of Andersson et al~\cite{andersson1997} gives a characterization of the PDAGs that are essential graphs with four conditions \cite{MR3127848}. (1) No partially directed cycles (i.e. chain graph), (2) The subgraph formed by taking only the undirected edges is chordal, (3) The graph in Figure~\ref{av} does not occur as an induced subgraph, and 
\begin{figure}
 \centering
 \includegraphics[scale = .7]{./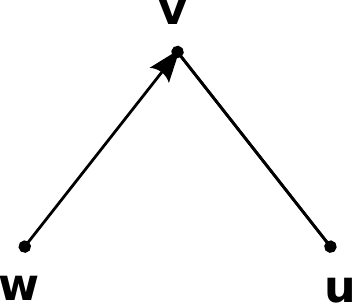}
\caption{Forbidden subgraph}\label{av}
\end{figure}

(4) Every directed edge is strongly protected: A directed edge $u \rightarrow v$ is strongly protected if it occurs in one of the four induced subgraphs in Figure~\ref{pd}.

\begin{figure}
 \centering
 \includegraphics[scale = .7]{./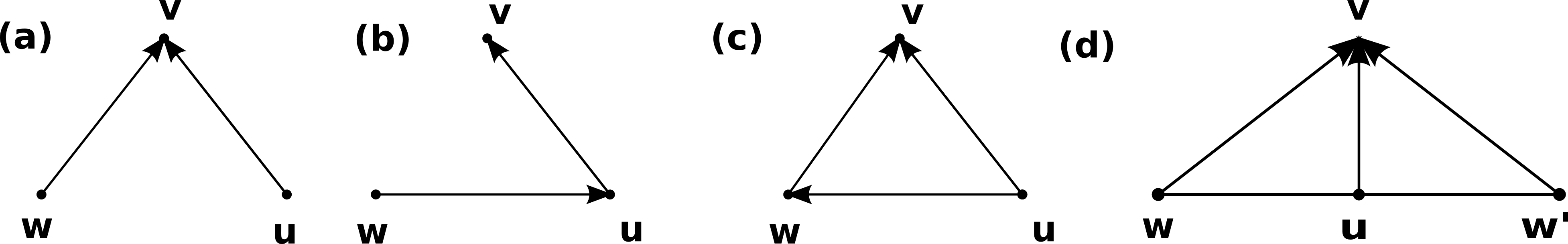}
\caption{Protected edges}\label{pd}
\end{figure}

The essential graphs for Markov equivalence classes containing a single DAG are the essential graphs with only directed edges. In one direction, this follows since if the class has one DAG then all the edges are directed consistently within the class. In the other direction, if there are two DAGs in the class they have the same skeleton and can only differ in the direction of some edge. That edge would then be undirected in the essential graph.

A PDAG with no undirected edges has fewer conditions to be an essential graph: (1) It is a DAG (2) All edges $u \rightarrow v$ are protected by being in one of three induced graphs (a),(b),(c).

\begin{prop}\label{protected}
An edge $u \rightarrow v$ is protected in a PDAG with only directed edges if $\{w| w \rightarrow u\} \neq \{w | w\neq u, w \rightarrow v\}$
\end{prop}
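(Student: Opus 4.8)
The plan is to translate the hypothesis into a statement about parents and then, in each case, produce one of the three directed protecting patterns of Figure~\ref{pd}. Write $P_u=\{w:w\to u\}$ and $P_v=\{w:w\neq u,\ w\to v\}$ for the two sets in the statement; these are the parents of $u$ and the parents of $v$ other than $u$ (note $u$ lies in $\{w:w\to v\}$ since $u\to v$, which is why $u$ is excluded from $P_v$). The three patterns available, being exactly the configurations from Figure~\ref{pd} that use no undirected edge, are: (a) a chain $w\to u\to v$ with $w$ not adjacent to $v$; (b) an immorality $u\to v\leftarrow w$ with $u$ not adjacent to $w$; and (c) a directed triangle with $u\to w$, $w\to v$, and $u\to v$. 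Since the hypothesis says $P_u\neq P_v$, their symmetric difference is nonempty, and I would split into two cases according to which set contributes a witness $w$.

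In the first case I would take $w\in P_u\setminus P_v$, so $w\to u$ while $w\not\to v$. The edge $u\to v$ gives the chain $w\to u\to v$, and it remains only to show that $w$ and $v$ are non-adjacent in order to land in pattern (a). The two possible orientations of a $w$–$v$ edge are $w\to v$, which is excluded by $w\notin P_v$, and $v\to w$, which would close the directed cycle $w\to u\to v\to w$ and contradict acyclicity of the DAG. Hence $w$ and $v$ must be non-adjacent and (a) holds.

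In the second case I would take $w\in P_v\setminus P_u$, so $w\neq u$, $w\to v$, and $w\not\to u$. If $w$ and $u$ are non-adjacent, then $u\to v\leftarrow w$ is an immorality with $u,w$ non-adjacent, which is pattern (b). Otherwise $w$ and $u$ are adjacent; since $w\not\to u$ the orientation must be $u\to w$, and together with $w\to v$ and $u\to v$ this is the directed triangle (c). Either way $u\to v$ is protected, completing both cases.

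The argument is essentially a bookkeeping exercise, so the main thing to get right is the repeated use of acyclicity to eliminate the orientation that would otherwise defeat each case (most visibly the $3$-cycle $w\to u\to v\to w$ in the first case). I would also double-check the degenerate reading of the sets — that $u$ is correctly excluded from $P_v$ and that $w\to u$ already forces $w\neq u$ — since a sign error there is the easiest way to state the wrong parent condition. As a consistency check, the contrapositive recovers the familiar fact that an unprotected directed edge is precisely a \emph{covered} edge, $P_u=P_v$.
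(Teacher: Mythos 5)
Your proof is correct and follows essentially the same case analysis as the paper's: a witness $w$ with $w\to u$, $w\not\to v$ yields the chain configuration (your use of acyclicity to rule out $v\to w$ makes explicit a step the paper leaves implicit), while a witness $w\to v$ with $w\not\to u$ yields the directed triangle or the immorality according to whether $u\to w$. The only discrepancy is cosmetic: your letters (a) and (b) are swapped relative to the paper's Figure~\ref{pd}, in which (a) is the immorality ($v$-configuration) and (b) is the chain, so the labels would need to be interchanged before splicing into the paper.
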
 
\begin{proof}
If there exists a $w$ in the first set but not the second, then $u,v,w$ form the induced subgraph in (b). If $w$ is in the second set but not the first, then either $u \rightarrow w$ or not. In the first case this forms the induced subgraph in (c), otherwise the induced subgraph of (a).
\end{proof} 

This paper is comprised of three sections. Section~\ref{mciec} investigates a Markov chain for uniform generation of the DAGs in a Markov equivalence class. It finds a class of graphs on which the associated Markov chain is slow mixing, but a condition for fast mixing as well. The key barrier to fast mixing is large cliques with substantial intersection (roughly half). Section~\ref{paeg} gives a structure theorem for understanding Markov equivalence classes in terms of posets and uses the structure theorem to explore the ratio of DAG's to Markov equivalence classes. Section~\ref{omc} relates the observations to a Markov chain for uniform generation of Markov equivalence classes by He, Jia, and Yu \cite{MR3127848}. In particular, this part provides a simpler, shorter proof of the fact that the chain due to He et al is ergodic. This section concludes with the construction of sparse PDAG's with small Hamming distance but large distance in the chain using moves with positive probability. This highlights the fact that analysis of convergence to equilibrium of the chain is unlikely to be successful using straightforward canonical path arguments or coupling techniques.

The related problem of counting the number of DAGs in a Markov equivalence class has been studied combinatorially in \cite{MR2253771} and algorithmically in recent work of He and Yu\cite{2016arXiv161007921H}. The latter's algorithm is best suited to graphs with many vertices adjacent to all other vertices. Our Markov chain is fast mixing on many graphs that lack this feature, but the simplest example (see Proposition \ref{slow}) of a graph on which our Markov chain is slow mixing is well suited to their algorithm. There are however, many graphs that are ill suited for both, such as chains of half overlapping large cliques.

\section{Edge Flip Random Walk on Chordal Graphs}\label{mciec}

This section investigates the mixing time of a Markov chain designed to pick random samples from the DAGs forming the equivalence class corresponding to a specific essential graph. This involves choosing acyclic orientations for each of the undirected edges in the essential graph in such a way as to form no $v$-configurations. (Recall that a $v$-configuration is an induced subgraph on three vertices depicted in  Figure~\ref{pd}(a).) This depends on only the undirected edges of the essential graph and can be done for each connected and undirected component separately.

The Markov chain is then on $v$-configuration-free acyclic orientations of connected chordal undirected graphs $G = (V,E)$. A step in the Markov chain is to reverse the direction of a single edge so as to give another such orientation. Let $H_G$ be the graph with vertices the orientations for $G$ and edges the transitions of the Markov chain. Note that $H_G$ is a $|E|$-regular graph, and the Markov chain has uniform stationary distribution. When drawing $H_G$, we will suppress self-loops.

\begin{prop}\label{usource}
Any  $v$-configuration-free acyclic orientation of a connected graph $G$ has a unique source.
\end{prop}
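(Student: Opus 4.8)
The plan is to establish existence and uniqueness of the source separately, with uniqueness being the substantive part. For existence, I would observe that \emph{any} finite acyclic orientation has at least one source: starting from an arbitrary vertex and repeatedly stepping backward along an incoming edge produces a sequence of vertices which, by acyclicity, cannot repeat and so, by finiteness, must terminate at a vertex with no incoming edge. This step uses neither connectivity nor the $v$-configuration-free hypothesis.

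For uniqueness I would argue by contradiction. Suppose $s_1 \ne s_2$ are both sources. Since $G$ is connected, fix a \emph{shortest} path $s_1 = v_0, v_1, \ldots, v_k = s_2$ in the skeleton. Because $s_1$ is a source, the first edge must be oriented $v_0 \to v_1$; because $s_2$ is a source, the last edge must be oriented $v_k \to v_{k-1}$. Hence as we traverse the path the orientation of the edges must ``reverse'' at some interior vertex: there is an index $i$ with $v_{i-1} \to v_i$ and $v_{i+1} \to v_i$, so that $v_i$ is a local sink along the path whose two path-neighbors both point into it. At this point I would invoke the $v$-configuration-free hypothesis: since $v_{i-1} \to v_i \leftarrow v_{i+1}$, the vertices $v_{i-1}$ and $v_{i+1}$ must be adjacent in $G$, for otherwise they would form the forbidden induced $v$-configuration at $v_i$. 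But then $v_0, \ldots, v_{i-1}, v_{i+1}, \ldots, v_k$ is a strictly shorter $s_1$--$s_2$ path, contradicting minimality. Thus two distinct sources cannot coexist, and together with existence this gives a unique source.

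The delicate point to get right is the ``sign change'' step followed by the correct application of the induced-subgraph condition. One must check that along a \emph{shortest} path the two neighbors $v_{i-1}$ and $v_{i+1}$ are genuinely non-adjacent (they sit at path-distance two, so an edge between them would already violate minimality), and that deleting $v_i$ leaves a legitimate simple path. Connectivity is essential, since it is exactly what supplies the path linking the two purported sources; note that chordality is \emph{not} needed for this particular statement, only for the later analysis of the chain.
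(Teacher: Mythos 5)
Your proof is correct and follows essentially the same route as the paper: a shortest path between two purported sources, the forced ``sign change'' at an interior vertex, and the $v$-configuration-free hypothesis producing a chord that contradicts minimality of the path. The only difference is that you also supply the (standard) existence argument, which the paper's proof leaves implicit, treating only uniqueness.
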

\begin{proof}
Let $G$ be a connected graph. 
Let $\tilde{G}$ be a $v$-configuration-free  acyclic orientation of $G$. Suppose there are two sources $v$ and $w$ in $\tilde{G}$. $G$ is connected, so there exists a minimal length undirected path from $v$ to $w$, $(v,v_1,v_2,...,v_k,w)$. Since $v$ and $w$ are sources, the edge between $v$ and $v_1$ is directed $v \rightarrow v_1$ and the edge $w$ to $v_k$ is directed $v_k \leftarrow w$. There must be at least one point on this path where right directed edges meet left directed edges, $v_i \rightarrow v_{i+1} \leftarrow v_{i+2}$. Since $\tilde{G}$ is $v$-configuration-free, there must be an edge $v_i$ to $v_{i+2}$ in $G$. However, this gives a shorter path $v$ to $w$. Therefore, there can only be a single source in $G$.
\end{proof}

As a side note, the chordal condition for the undirected component of a PDAG exists because there are only acyclic $v$-configuration-free orientations of chordal graphs. This means the lack of chordal in the above proposition is not meaningful, as the result is vacuous for non-chordal graphs.

\begin{prop}\label{orientedges}
The unique source of a $v$-configuration-free acyclic orientation of a connected graph $G$ determines the orientation of all edges with one endpoint closer to the source than the other as away from the source.
\end{prop}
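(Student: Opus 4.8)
The plan is to argue by induction on the graph distance from the unique source. Write $s$ for the unique source guaranteed by Proposition~\ref{usource}, and for each vertex $x$ let $d(x)$ denote the length of a shortest undirected path from $s$ to $x$ in $G$. Because $G$ is connected this is well defined, and because any two adjacent vertices are joined by an edge their distances from $s$ can differ by at most one. Consequently an edge $\{u,v\}$ has ``one endpoint closer to the source than the other'' precisely when $d(u)$ and $d(v)$ differ, in which case they differ by exactly one; relabeling, I may assume $d(v)=d(u)+1$, and the claim to be proved is that such an edge is oriented $u\rightarrow v$, i.e.\ away from the source.

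First I would dispose of the base case. If $d(u)=0$ then $u=s$ is the source, so every edge incident to $u$ is outgoing and in particular $\{u,v\}$ is oriented $u\rightarrow v$.

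For the inductive step, fix an edge $\{u,v\}$ with $d(u)=k\geq 1$ and $d(v)=k+1$, and assume the statement holds for every edge whose nearer endpoint lies at distance less than $k$. Choosing a shortest path from $s$ to $u$ and taking its penultimate vertex $w$, I obtain a neighbor $w$ of $u$ with $d(w)=k-1$. Since $w$ is the nearer endpoint of $\{w,u\}$ and sits at distance $k-1<k$, the inductive hypothesis orients it $w\rightarrow u$. The key step is then to rule out the orientation $v\rightarrow u$: were it to hold, the induced substructure on $\{w,u,v\}$ would be $w\rightarrow u\leftarrow v$, whose only escape from being a $v$-configuration is an edge between $w$ and $v$. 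But $d(w)=k-1$ and $d(v)=k+1$ differ by two, and adjacent vertices can differ in distance by at most one, so $w$ and $v$ are non-adjacent. Hence $v\rightarrow u$ would force a $v$-configuration at $u$, contradicting the $v$-configuration-free hypothesis. Therefore $\{u,v\}$ is oriented $u\rightarrow v$, completing the induction.

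I expect the only delicate point to be the bookkeeping that makes the contradiction clean, namely verifying that the nearer predecessor $w$ of $u$ lies at distance exactly two from $v$ and hence cannot be adjacent to it; everything else follows formally once the right triple $\{w,u,v\}$ is isolated. The acyclicity hypothesis is not invoked directly beyond what is already packaged into the existence of the unique source, so the heart of the argument is the interplay between the $v$-configuration-free condition and the parity of distances along a shortest path.
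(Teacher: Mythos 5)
Your proof is correct and follows essentially the same route as the paper's: induction on distance from the source, using an inductively-oriented predecessor edge $w\rightarrow u$ and the fact that $w$ and $v$ cannot be adjacent to force a forbidden $v$-configuration $w\rightarrow u\leftarrow v$. If anything, your explicit distance bookkeeping ($d(w)=k-1$ versus $d(v)=k+1$) makes the non-adjacency step cleaner than the paper's terser justification.
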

\begin{proof}
The proof follows by induction on the distance from the edge to the source. If the edge is incident to the source, i.e. distance zero, then the edge must be oriented away from the source. Suppose this holds for all edges at most distance $d$ from the source. Let $e = \{v,w\}$ be an edge distance $d+1$ from the source with $v$ closer to the source. This means there must be an edge $f = \{v,z\}$ incident to $d$ at distance $k$ whose orientation is forced to be towards $v$. Moreover, there cannot be an edge from $z$ to $w$ since $v$ is closer to the source. If $e$ were oriented towards $v$, then $z \rightarrow v \leftarrow w$ forms a $v$-configuration. Since the orientation is $v$-configuration-free, $e$ is oriented away from the source. 
\end{proof}


\begin{example}
If $G = K_n$, the $v$-configuration-free acyclic orientations are in bijection with the permutations of $n$. This follows from Proposition \ref{orientedges} by successively choosing from the remaining vertices a source which orients the edges adjacent to that source. Only the edges between two consecutive choices of sources can be flipped, so this Markov chain is the adjacent transposition walk on the symmetric group $S_n$. The adjacency graph $H_G$ is the Permutohedron.
\end{example}

\begin{example}
If $G$ is the path, or indeed any tree, $H_G$ is isomorphic with $G$. Since there is a unique path from any vertex to any other vertex, by Proposition \ref{orientedges}, the states are entirely determined by the unique source in the orientation. A move in the Markov chain moves the source to an adjacent vertex.
\end{example}

These two cases are the extreme examples, and the structure for any $G$ can be viewed by decomposing into pieces of these types. The following structure theorem for chordal graphs is the key (see Theorem~12.3.11 in Diestel \cite{MR2744811}):

\begin{prop}
A graph is chordal if and only if it has a tree-decomposition into complete parts.
\end{prop}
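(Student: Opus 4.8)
The plan is to prove the two implications separately, in both cases reducing everything to the Helly property of subtrees of a tree. First I would fix terminology: a \emph{tree-decomposition into complete parts} is a tree $T$ together with a family of \emph{bags} $\{V_t\}_{t\in T}$, each inducing a complete subgraph of $G$, such that every vertex of $G$ lies in some bag, every edge of $G$ has both ends in a common bag, and for each vertex $v$ the set $T_v=\{t : v\in V_t\}$ induces a subtree of $T$. The entire argument then hinges on two facts about such families of subtrees: (i) finitely many pairwise-intersecting subtrees of $T$ have a common node (the Helly property), and (ii) there is no family of subtrees $S_1,\dots,S_k$ with $k\ge 4$ such that $S_i\cap S_{i+1}\neq\emptyset$ cyclically while $S_i\cap S_j=\emptyset$ for non-consecutive indices.

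For the implication that a tree-decomposition into complete parts forces $G$ to be chordal, I would argue that there is no induced cycle of length at least four. Given such a cycle $c_1c_2\cdots c_k$, consider the subtrees $T_{c_1},\dots,T_{c_k}$. Consecutive vertices are adjacent, so their edge lies in a common bag and the corresponding subtrees meet; non-consecutive vertices are non-adjacent, so they cannot share a bag (a shared bag is complete and would make them adjacent), whence their subtrees are disjoint. This is exactly the configuration forbidden by fact (ii), a contradiction. Fact (ii) itself I would establish by choosing a node $p_i\in T_{c_i}\cap T_{c_{i+1}}$ for each $i$, joining $p_{i-1}$ to $p_i$ by the path lying inside the connected set $T_{c_i}$, and observing that the disjointness of non-consecutive subtrees keeps these paths from collapsing, so that their concatenation is a genuine cycle in $T$ --- impossible in a tree.

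For the converse, I would induct on the number of vertices using a simplicial vertex. By Dirac's lemma every nonempty chordal graph $G$ has a simplicial vertex $v$, i.e. one whose neighborhood $N(v)$ is complete. The graph $G-v$ is again chordal, so by induction it admits a tree-decomposition into complete parts. Here I would use fact (i) in the form of the standard lemma that every clique of a graph is contained in a single bag of any tree-decomposition: applying it to the clique $N(v)$ of $G-v$ locates a bag $V_s\supseteq N(v)$. Attaching to $s$ a new leaf whose bag is $N(v)\cup\{v\}$ --- a clique of $G$ since $v$ is simplicial --- yields a tree-decomposition of $G$ whose bags are all complete, completing the induction.

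I expect the main obstacle to be the two Helly-type facts about subtrees, and in particular making fact (ii) rigorous: the concatenated walk must be shown to be a genuine cycle rather than a walk that backtracks and cancels, which is where the disjointness of non-consecutive subtrees is essential. Everything else --- the existence of simplicial vertices and the hereditary nature of chordality --- is routine.
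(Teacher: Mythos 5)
First, a point of comparison: the paper does not prove this proposition at all --- it is quoted from Diestel (Theorem~12.3.11) --- so your argument must stand entirely on its own. Your converse direction is correct and standard: Dirac's simplicial-vertex lemma, induction on the number of vertices, the clique-in-a-bag lemma (via the Helly property, your fact (i)) to locate a bag containing $N(v)$, and a new leaf with bag $N(v)\cup\{v\}$. The reduction of the forward direction to your fact (ii) is also correct: for an induced cycle $c_1\cdots c_k$ with $k\ge 4$, consecutive subtrees meet and non-consecutive subtrees are disjoint precisely because the bags are complete.

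The gap is in your justification of fact (ii), and you only half-diagnose it. The concatenation of the paths $P_i$ (from $p_{i-1}$ to $p_i$ inside $T_{c_i}$) is merely a closed walk, and the disjointness of non-consecutive subtrees does \emph{not} prevent backtracking: consecutive subtrees $T_{c_i}$ and $T_{c_{i+1}}$ may overlap in a long segment, so $P_{i+1}$ can begin by retracing the end of $P_i$. What disjointness actually gives is only that the walk has positive length (if all $p_i$ coincided at a point $p$, then $p\in T_{c_1}\cap T_{c_3}$, which is forbidden). The missing idea is a minimality argument: choose the nodes $p_i\in T_{c_i}\cap T_{c_{i+1}}$ so as to minimize $\sum_i |P_i|$. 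If the walk backtracked at the junction $p_i$, the repeated neighbor $x$ of $p_i$ would lie on both $P_i\subseteq T_{c_i}$ and $P_{i+1}\subseteq T_{c_{i+1}}$, hence $x\in T_{c_i}\cap T_{c_{i+1}}$, and replacing $p_i$ by $x$ would shorten the total length by $2$, a contradiction. The minimal choice therefore yields a cyclically non-backtracking closed walk of positive length, and such a walk cannot exist in a tree (non-backtracking walks in trees are simple paths); that is the precise sense in which you obtain a ``genuine cycle.'' Alternatively, and closer to the standard proofs, you can avoid walks entirely: since $T_{c_1}$ and $T_{c_3}$ are disjoint subtrees, some edge $uv$ of $T$ separates them, with $T_{c_1}$ on the $u$-side and $T_{c_3}$ on the $v$-side; any subtree meeting both sides must contain both $u$ and $v$; this applies to $T_{c_2}$, and also to some $T_{c_j}$ with $j\notin\{1,2,3\}$ on the other arc $c_3,c_4,\dots,c_k,c_1$ of the cycle, whence $T_{c_j}\cap T_{c_2}\ne\emptyset$, contradicting the disjointness of non-consecutive subtrees. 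Either repair closes the hole; as written, the step ``their concatenation is a genuine cycle'' is not justified.
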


\begin{prop}
A tree-decomposition of a chordal graph is a tree $T$ with vertices the maximal cliques of $G$ and for any vertices $t_1,t_2,t_3 \in T$, if $t_2$ is along the unique path from $t_1$ to $t_3$, $t_1 \cap t_3 \subseteq t_2$.
\end{prop}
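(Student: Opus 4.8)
The plan is to refine the tree-decomposition into complete parts supplied by the previous proposition into one whose parts are exactly the maximal cliques, after which the stated intersection condition turns out to be merely the connectedness axiom of a tree-decomposition spelled out for this particular decomposition. Throughout, fix a tree-decomposition $(T_0,\{B_t\})$ of $G$ in which every bag $B_t$ is complete, write $S_v=\{t: v\in B_t\}$ for the set of bags containing a vertex $v$, and recall that, by definition of a tree-decomposition, each $S_v$ is a connected subtree of $T_0$.

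First I would show that every maximal clique occurs as a bag. For an edge $uw$ of $G$ some bag contains both endpoints, so the subtrees $S_u$ and $S_w$ meet; hence for any clique $K$ the subtrees $\{S_v:v\in K\}$ meet pairwise, and the Helly property for subtrees of a tree produces a common node $t^\ast$ with $K\subseteq B_{t^\ast}$. Taking $K$ to be a maximal clique $M$ and using that $B_{t^\ast}$ is complete forces $M=B_{t^\ast}$.

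Next I would prune $T_0$ to make the bags exactly the maximal cliques. Repeatedly contracting any edge $st$ with $B_s\subseteq B_t$ (keeping the larger bag) preserves the tree-decomposition axioms and decreases the number of nodes, so it terminates at a tree $T$ in which no bag is contained in a neighbouring bag. If some bag $B_s$ were not maximal it would sit inside a maximal clique $M=B_{s'}$ with $s\neq s'$; every $v\in B_s$ would then lie in both $B_s$ and $B_{s'}$ and hence in every bag along the $s$--$s'$ path, so the first bag $B_r$ on that path would contain $B_s$, contradicting that no containment edges remain. Since a bag equal to a maximal clique can only be absorbed into an identical bag, no maximal clique is lost, and the same path argument rules out a maximal clique appearing twice; thus the vertices of $T$ are precisely the maximal cliques of $G$.

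Finally, the intersection property is immediate from connectedness: if $t_2$ lies on the path from $t_1$ to $t_3$ and $v\in t_1\cap t_3$, then $t_1,t_3\in S_v$, so $t_2\in S_v$ and $v\in t_2$, giving $t_1\cap t_3\subseteq t_2$. I expect the cleanup of the third paragraph to be the only delicate point, since one must simultaneously check that contracting containment edges respects the decomposition axioms and leaves each maximal clique present exactly once. A self-contained alternative that sidesteps this bookkeeping is induction on $|V(G)|$: a leaf bag of $(T_0,\{B_t\})$ not contained in its neighbour exhibits a simplicial vertex $v$, and a clique tree for $G-v$ is extended either by enlarging the bag equal to $N(v)$ or by hanging a new bag $N(v)\cup\{v\}$ off a bag containing $N(v)$.
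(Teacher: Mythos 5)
Your proof is correct, but note that the paper itself offers no proof of this proposition: it is stated without argument, as a standard structural fact about chordal graphs (the ``clique tree'' form of the preceding proposition, which the paper attributes to Diestel, Theorem~12.3.11). So there is no paper proof to compare against step by step; what you have produced is a complete derivation of a fact the authors take as known. Your route --- starting from a tree-decomposition into complete parts, using the Helly property of subtrees of a tree to show every maximal clique occurs as a bag, then contracting containment edges $B_s \subseteq B_t$ until the bags are exactly the maximal cliques (each appearing once), and finally reading off the running-intersection condition $t_1 \cap t_3 \subseteq t_2$ from the connectedness axiom --- is the standard construction of a clique tree, and you handle the delicate points: contraction preserves the three tree-decomposition axioms, a bag equal to a maximal clique can only be absorbed into an identical bag (so no maximal clique is lost), and the path argument eliminates both non-maximal bags and duplicate bags. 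The inductive alternative via simplicial vertices that you sketch at the end is also sound. What your write-up buys is self-containedness; what the paper's choice buys is brevity, since this proposition functions there essentially as a definition-cum-citation, setting up the notation ($T$, $t_i$, $t_i \cap t_j$) used later in the decomposition of $H_G$, rather than as a result the authors felt required proof.
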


\begin{figure}
\centering
 \includegraphics[scale=.8]{./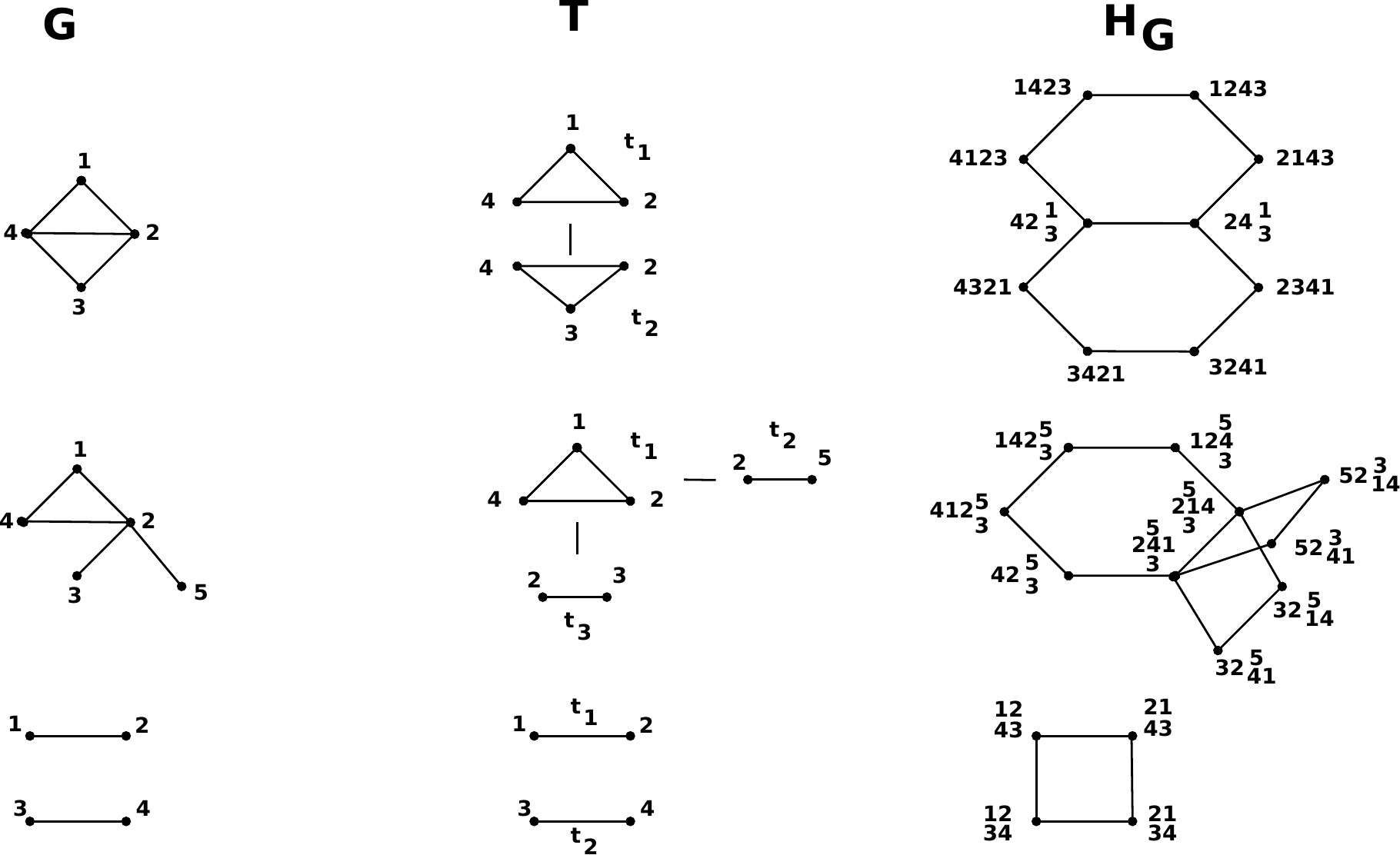}
\caption{Three graphs with corresponding tree decompositions and adjacency graphs}
\end{figure}

\begin{defn}
A maximal clique $t$ is a {\em non-follower} in an orientation of $G$ when for all edges $(v,w)$ with $w \in t$, then $v \in t$ as well. 
\end{defn}

The structure of the graph $H_G$ is closely related to the structure of the tree decomposition of $G$. The following characterization of $H_G$ will be the key to analyzing the v-configuration free edge flip random walk on $G$, or equivalently the random walk on $H_G$. As the v-configuration free edge flip walk on a clique is the random walk on a permutohedron, the key to understanding $H_G$ is to describe how the permutohedra for the maximal cliques in the tree decomposition of $G$ occur in $H_G$. 

To get $H_G$, first for each maximal clique $t_i$, its graph $H_{t_i}$, a permutohedron, is dilated by taking the Cartesian product with other permutohedra. Then these pieces are glued together by identifying faces of the polytopes to form $H_G$.
Given a tree-decomposition $T$ of $G$ and a maximal clique $t_i$, for each other maximal clique $t_j$ let $s_j$ be the clique immediately before $t_j$ on the unique path from $t_i$ to $t_j$. The dilation of $t_i$ will be $D_i = \prod_{j \neq i}  H_{t_j  \cap s_j^c}$ where the product is Cartesian product. Then glue for $t_i$ and $t_j$ adjacent in $T$, $H_{t_i} \times D_i$ and $H_{t_j} \times D_j$ along $H_{t_i \cap t_j} \times D_{i,j}$ where $D_{i,j} = H_{t_i \cap t_j^c} \times D_i = H_{t_j \cap t_i^c} \times D_j$.

\begin{prop}
$H_G$ is formed by first making $H_{t_i} \times D_i$ for each maximal clique $t_i$ in $G$. For each pair of maximal cliques with non-empty intersection, their respective pieces are identified along the faces $H_{t_i \cap t_j} \times D_{i,j}$. 
\end{prop}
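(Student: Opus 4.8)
The plan is to identify each polytopal piece $H_{t_i}\times D_i$ with a concrete stratum of orientations and then verify that $H_G$ is exactly their union, glued as described. The organizing notion is the \emph{non-follower}: I claim the vertices of $H_{t_i}\times D_i$ are in bijection with the $v$-configuration-free acyclic orientations of $G$ in which $t_i$ is a non-follower. The forward map is restriction. An acyclic orientation of a clique is a transitive tournament, hence a total order, so restricting such an orientation to $t_i$ gives a vertex of the permutohedron $H_{t_i}$, and restricting to the new vertices $t_j\cap s_j^c$ of each other maximal clique $t_j$ gives a vertex of the factor $H_{t_j\cap s_j^c}$ of $D_i$. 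The substance is the inverse map: traversing $T$ outward from $t_i$, the non-follower hypothesis together with Proposition~\ref{orientedges} forces every new vertex to be oriented downstream of its separator $t_j\cap s_j$, so the locally chosen orders assemble into a single orientation of $G$ with no further freedom.

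Next I would match the one-skeleta. An edge of a Cartesian product of polytopes is a move in exactly one factor, so an edge of $H_{t_i}\times D_i$ is an adjacent transposition either in the order on $t_i$ or in the order on some $t_j\cap s_j^c$. I would check that each such transposition is realized by reversing a single edge of $G$, keeps the orientation $v$-configuration-free and acyclic, and preserves the property that $t_i$ is a non-follower; conversely, that any edge-flip of the Markov chain between two orientations of this stratum has this form. This makes each piece, as a subgraph of $H_G$, isomorphic to the one-skeleton of $H_{t_i}\times D_i$.

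The gluing is then read off from orientations lying in two strata at once. For $t_i,t_j$ with nonempty intersection, an orientation makes both non-followers precisely when every edge leaving the separator $t_i\cap t_j$ points away from it, so that the only remaining freedom is a total order on $t_i\cap t_j$ (a vertex of $H_{t_i\cap t_j}$) together with the downstream orders recorded by $D_{i,j}$. I would then verify that the two expressions $D_{i,j}=H_{t_i\cap t_j^c}\times D_i=H_{t_j\cap t_i^c}\times D_j$ genuinely agree, using the separator property $t_1\cap t_3\subseteq t_2$, so that the faces contributed by the two pieces coincide and are glued consistently. Finally, one shows every orientation has at least one non-follower clique---the intuition being the most upstream clique, which must contain the unique source of Proposition~\ref{usource}---so the pieces cover all of $H_G$.

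The step I expect to be hardest is showing the reconstruction map of the first paragraph lands in genuinely valid orientations and that the face identifications are exact. The engine here is the structural fact that $v$-configuration-freeness is equivalent to every vertex having a clique in-neighborhood; this is what lets one verify that the assembled orientation is \emph{globally} acyclic and $v$-configuration-free, not merely consistent clique by clique, and what underlies the existence of a non-follower. The separator property must then be invoked again to rule out long-range directed cycles and to guarantee that cliques with nonempty intersection but \emph{not} adjacent in $T$ still glue along the stated face. I would organize the whole verification as an induction on the number of maximal cliques, deleting a leaf of $T$; the genuinely delicate point is that removing a leaf perturbs $D_i$ for \emph{every} $i$, so the real obstacle is the inductive bookkeeping rather than any isolated geometric claim.
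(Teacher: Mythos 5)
Your proposal takes essentially the same route as the paper's proof: both stratify the orientations by which maximal cliques are non-followers, identify the vertices of $H_{t_i}\times D_i$ with the orientations in which $t_i$ is a non-follower (via the unique source of Proposition~\ref{usource} and the forced orientations of Proposition~\ref{orientedges}, which in the paper appear as the recursive choice of sources), and read off the gluing from orientations having several non-followers at once. Your plan is more explicit than the paper's short argument about the one-skeleton matching, the existence of a non-follower, and the consistency of the two expressions for $D_{i,j}$, but the underlying decomposition is identical.
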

\begin{proof}

As shown above in Proposition \ref{usource}, each acyclic, $v$-configuration-free orientation of $G$ has a unique source. This source as in Proposition \ref{orientedges} determines the orientation of all edges with one end point closer to the source than the other. Fixing only a source and orienting these edges breaks down the graph into disjoint components with independence on how to orient the disconnected pieces. This will give rise to the decomposition. The maximal cliques containing the source remain in the same component. To orient the remaining edges, sources are recursively chosen until all maximal cliques are disconnected. The non-followers are the maximal cliques containing all recursive choices of sources up to when the maximal cliques are disconnected. An orientation will be part of $H_{t_i} \times D_i$ when $t_i$ is a non-follower in the orientation. The gluing comes from when multiple cliques are non-followers in the orientation. $H_{t_i \cap t_j} \times D_{i,j}$ are all the orientations from choosing the first $|t_i \cap t_j|$ recursive sources in $t_i \cap t_j$.
\end{proof}


\begin{cor}
Let $C(G)$ be the number of maximal cliques in $G$. For an orientation $v \in H_G$, let $M(v)$ be the number of non-following cliques in $v$. The degree in $H_G$ of an orientation is $|G| - C(G) + M(v)-1$. The minimal degree is $|G| - C(G)$.
\end{cor}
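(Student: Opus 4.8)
The plan is to realize the degree of $v$ in $H_G$ as the number of distinct edges incident to $v$ in the polytopal complex furnished by the preceding structure proposition, and to compute it by exploiting that each gluing piece is a simple polytope in which \emph{every} vertex has degree equal to the dimension. Recall that $v$ lies in exactly the pieces $P_i := H_{t_i}\times D_i$ for which $t_i$ is a non-follower, so $v$ sits in precisely $M(v)$ of them. Since each $H_{t_j}$ is a permutohedron and $P_i$ is a Cartesian product of permutohedra, $P_i$ is a simple polytope; hence every vertex of $P_i$, in particular $v$, has degree exactly $\dim P_i$.

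I would then compute this common dimension. Writing $\dim H_{t_j\cap s_j^c}=|t_j|-|t_j\cap s_j|-1$ and summing over $j\neq i$, the quantities $|t_j\cap s_j|$ range over the separators of the tree edges, each counted once regardless of the root $t_i$; so by the clique-tree (running-intersection) property stated above, $\dim P_i=\sum_j|t_j|-C(G)-\sum_{\text{tree edges}}|t\cap t'|=|G|-C(G)=:d$, independent of $i$. The same partition $t_i=(t_i\cap t_j)\sqcup(t_i\setminus t_j)$ yields $\dim F_{ij}=\dim P_i-1$, so adjacent pieces are glued along \emph{facets}.

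Next I would show that the non-follower cliques of $v$ all contain the unique source $s_0$ of Proposition~\ref{usource} and form a subtree $T'$ of the clique tree. The containment is immediate: the minimum vertex of a non-follower clique has no in-neighbor inside the clique (all clique edges point away from it) and none outside (non-follower), hence is a source and so equals $s_0$. Granting that $T'$ is a subtree, every pair of non-follower pieces meets (they share $s_0$), and I would run a leaf induction on $T'$: peeling a leaf $\ell$ with parent $p$, the piece $P_\ell$ contributes $d$ edges at $v$, of which the $d-1$ lying in the facet $F_{\ell p}$ are identified with already-counted edges of $P_p$, while the single transverse edge is genuinely new — by running intersection $F_{\ell j}\subseteq F_{\ell p}$ for every other $j\in T'$, so this edge cannot coincide with an edge of any other piece. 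Each of the $M(v)-1$ gluings thus adds exactly one edge, giving degree $d+M(v)-1=|G|-C(G)+M(v)-1$. Finally $M(v)\ge 1$ because the pieces cover $H_G$, so the minimal degree is $d=|G|-C(G)$, attained when $M(v)=1$.

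The main obstacle is the connectivity claim: if $t_i,t_j$ are non-followers and $t_k$ lies between them in the clique tree, then $t_k$ is a non-follower. Here $t_i\cap t_j\subseteq t_k$ forces $s_0\in t_k$, and I would argue by contradiction. An external in-edge $u\to w$ at some $w\in t_k$ (with $u\notin t_k$) forces $u$ adjacent to $s_0$ by $v$-configuration-freeness applied to $s_0\to w\leftarrow u$; tracking $w$ into successive separators via running intersection, the non-follower condition on the $t_i$-side forces $w$ to dominate the separator edge pointing toward $t_j$, and iterating this forcing produces an external in-edge into $t_j$, contradicting that $t_j$ is a non-follower. Making this propagation precise — orienting the crossing edges using $v$-configuration-freeness and pushing the defect step by step along the path — is the delicate part; the remainder is bookkeeping with dimensions of products of permutohedra.
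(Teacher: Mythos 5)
Your proposal follows the same skeleton as the paper's own proof: uniform degree $|G|-C(G)$ inside each piece $H_{t_i}\times D_i$, gluings along codimension-one faces, and $M(v)-1$ gluings at $v$. You correctly isolate the two claims this needs --- that every vertex of a piece has degree equal to the piece's dimension, and that the non-follower cliques form a subtree of the clique tree --- and you admit the second is unproven. The difficulty is that both claims are false, and in fact so is the formula being proved. Let $G$ have vertices $s,u,w,z,b$ and edges $su,sw,sz,sb,uw,wz$; this graph is connected and chordal with maximal cliques $t_1=\{s,u,w\}$, $t_2=\{s,w,z\}$, $t_3=\{s,b\}$, so $|G|-C(G)=2$. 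Orient it by $b\to s$, $s\to u$, $s\to w$, $s\to z$, $w\to u$, $w\to z$. This orientation is acyclic and $v$-configuration-free (every in-neighborhood is a clique), and its unique non-follower is $t_3$, because the edge $b\to s$ enters both $t_1$ and $t_2$ from outside; thus $M(v)=1$ and the corollary predicts degree $2$. But exactly three flips are legal: reversing $b\to s$ (making $s$ the source), reversing $w\to u$ (the in-neighborhood of $w$ becomes $\{s,u\}$, a clique), and reversing $w\to z$; reversing any of $s\to u$, $s\to w$, $s\to z$ creates a $v$-configuration at $s$, since $b$ is adjacent to none of $u,w,z$. Hence the degree is $3\neq 2$, and no proof strategy can close this gap.

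Here is where your steps break on this example. The orientations in which $t_3$ is a non-follower are the six with $s\to u$, $s\to w$, $s\to z$ (two choices for the edge $sb$ times three $v$-configuration-free choices for $uw,wz$); under edge flips they form a $2\times 3$ grid, not the four-vertex product $H_{t_3}\times D_3$ promised by the structure proposition, so that proposition --- on which both you and the paper rely --- is itself false, and the two interior vertices of the grid have degree $3$ inside this single piece. The orientation above is one of them, which is exactly why its degree exceeds $|G|-C(G)+M(v)-1$. Your connectivity lemma also fails for every admissible tree: the only clique trees satisfying the running-intersection property are $t_1-t_2-t_3$ and $t_2-t_1-t_3$ (a tree with center $t_3$ would require $t_1\cap t_2=\{s,w\}\subseteq t_3$). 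In the first, the orientation with source $s$ and $u\to w$, $w\to z$ has non-followers $\{t_1,t_3\}$ separated by the follower $t_2$; in the second, the orientation with source $s$ and $z\to w$, $w\to u$ has non-followers $\{t_2,t_3\}$ separated by the follower $t_1$. So the ``delicate part'' you flagged is not merely delicate but false, and no bookkeeping can rescue the statement: the degree equals the number of edges $x\to y$ whose reversal is legal, i.e. those for which the in-neighborhood of $y$ equals the in-neighborhood of $x$ together with $x$, and this quantity is not a function of $|G|$, $C(G)$, $M(v)$ alone --- the orientation $b\to s$, $s\to u$, $s\to w$, $s\to z$, $u\to w$, $w\to z$ also has $M=1$ but degree $2$.
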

\begin{proof}

The degree of the vertices in a component for a clique $H_{t_i}$ is $|t_i|-1$, the number of adjacent transpositions on $S_{|t_i|}$. Taking cartesian products adds the degree of the graphs involved, so the degree of a vertex in $H_{t_i} \times D_i$ is $|G|-C(G)$. Its left to count how many edges extend outside of $H_{t_i} \times D_i$. These components are glued together along the pairs $t_i,t_j$ with $t_i$ adjacent to $t_j$ in $T$ and $t_i \cap t_j \neq \emptyset$. The number of overlapping edges between the pieces glued together is the degree of a vertex in $H_{t_i \cap t_j} \times D_{i,j}$, namely $|G|- C(G) - 1$. Each gluing thus increases the degree of the orientations involved by one. The number of gluings involving an orientation is one less than the number of non-following cliques.

\end{proof}


\begin{prop}\label{slow}
There exists graphs $G$ for which the edge flip random walk is mixing exponentially slowly. For instance, when $G$ is made up with two cliques of size $\frac{2n}{3}$ sharing $\frac{n}{3}$ vertices, the mixing time $t_{\mix} \geq 4^{n-1}$.
\end{prop}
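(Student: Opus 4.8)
The plan is to produce a conductance bottleneck and apply the standard bottleneck-ratio bound $t_{\mix}\ge \tfrac{1}{4\Phi_\ast}$, where $\Phi_\ast=\min_{\pi(S)\le 1/2}Q(S,S^c)/\pi(S)$ and $Q(S,S^c)=\sum_{x\in S,\,y\notin S}\pi(x)P(x,y)$. Write $B=t_1\cap t_2$, $A=t_1\setminus B$, $C=t_2\setminus B$, each of size $n/3$. Since $A$ and $C$ span no edges, a state is exactly a pair of linear orders on $t_1$ and on $t_2$ agreeing on $B$, subject to $v$-configuration-freeness. By Proposition~\ref{usource} each state has a source, hence a non-following clique, so every state lies in $S:=\{t_1\text{ is a non-follower}\}$ (all of $B$ precedes all of $C$) or in $T:=\{t_2\text{ is a non-follower}\}$. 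By the $A\leftrightarrow C$ symmetry $\pi(S)=\pi(T)\ge\tfrac12$, so I take the cut $U=S\setminus T$ (the orientations in which $t_1$ is the \emph{unique} non-follower), which has $\pi(U)\le\tfrac12$.

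First I would read off the three relevant sizes from the structure theorem for $H_G$. With the two maximal cliques $t_1,t_2$, the block $S=H_{t_1}\times D_1$ has $(2n/3)!\,(n/3)!$ states (and $T$ the same), while the gluing face $F:=S\cap T=H_B\times H_A\times H_C$ (both cliques non-following) has exactly $((n/3)!)^3$ states. Hence $|U|=((n/3)!)^3\big(\binom{2n/3}{n/3}-1\big)$, so $\pi(F)/\pi(U)=\binom{2n/3}{n/3}^{-1}(1+o(1))$ is exponentially small.

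The crux is to show $F$ is a cut separating $U$ from $U'=T\setminus S$ and to bound the edge boundary of $U$; this is where the $v$-configuration-free condition does all the work. A vertex $b\in B$ receives edges from both $A$ and $C$ iff some $A$-element and some $C$-element both precede $b$, and a short argument (examining only the largest element $\max B$) shows an orientation is invalid precisely when some $A$-element \emph{and} some $C$-element both precede $\max B$. Therefore one can never have an $A$-element and a $C$-element simultaneously below $\max B$, so to move from $U$ (some $A$ but no $C$ below $\max B$) to $U'$ (the mirror image) every trajectory must first reach $F$ (nothing of $A\cup C$ below $\max B$): any single flip trying to shortcut this creates the forbidden $v$-configuration and is a self-loop. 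Thus $\partial U$ consists only of edges into $F$, and a direct check shows each state of $F$ has exactly one neighbour in $U$ — flip the edge between $\max B$ and $\min A$, pushing $\min A$ just below $\max B$ — giving $|\partial U|=|F|=((n/3)!)^3$.

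Assembling these, $Q(U,U^c)=|\partial U|/(N|E|)$ and $\pi(U)=|U|/N$ yield
\[
\Phi(U)=\frac{|\partial U|}{|E|\,|U|}=\frac{1}{|E|\big(\binom{2n/3}{n/3}-1\big)},
\]
so $t_{\mix}\ge \tfrac14\,|E|\big(\binom{2n/3}{n/3}-1\big)\ge \tfrac14\big(\binom{2n/3}{n/3}-1\big)$, which grows exponentially in $n$ and establishes the stated slow mixing. The main obstacle is the cut verification of the third paragraph: everything hinges on translating ``both cliques non-following'' into the single clean condition ``nothing of $A\cup C$ lies below $\max B$'' and on checking, via the forbidden $v$-configuration at vertices of $B$, that no single edge flip can carry a state of $U$ directly into $U'$. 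Once that structural fact is in hand, the size counts and the conductance estimate are routine.
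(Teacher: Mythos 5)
Your proof is correct and takes essentially the same route as the paper: both identify the bottleneck set as one clique's component minus the shared gluing face (your $U=S\setminus T$ is exactly the paper's set $R$), count the boundary edges as $|F|=((n/3)!)^3$ via the ``flip the edge between the last source of $B$ and the first source of $A$'' observation, and apply the bottleneck-ratio bound $t_{\mix}\ge 1/(4\Phi_*)$. Your write-up is in fact more careful than the paper's, since you explicitly verify $\pi(U)\le\tfrac12$, rule out direct transitions from $U$ to $U'$, and justify that each face state has exactly one neighbour in $U$. The one discrepancy is the advertised figure $4^{n-1}$: your bound $\tfrac14\bigl(\binom{2n/3}{n/3}-1\bigr)$ is of order $4^{n/3}$, and this is the correct order of the conductance bound --- the paper only reaches $4^{n-1}$ through the faulty Stirling estimate $\binom{2n/3}{n/3}\approx 4^n/\sqrt{\pi n}$ (correctly $\binom{2n/3}{n/3}\sim 4^{n/3}\sqrt{3/(\pi n)}$), so the literal constant in the statement is an artifact of that slip, while the substantive claim of exponentially slow mixing is exactly what you prove.
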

\begin{proof}
$G$ has two maximal cliques $t_1$ and $t_2$, each of size $\frac{2n}{3}$, with intersection $s$ of size $n/3$. For each maximal clique, its component is $C_i = H_{K_{\frac{2n}{3}}} \times H_{K_{\frac{n}{3}}}$, identified along the face $F = H_{K_{\frac{n}{3}}} \times H_{K_{\frac{n}{3}}} \times H_{K_{\frac{n}{3}}}$, following our construction of $H_G$ above. The face $F$ corresponds to orientations where both maximal cliques are non-followers. Let $R$ be equal to one of these components $C_1/F$ without the intersection, i.e. orientations when one clique is the leader. Let $Q(A,B)$ be the chance of moving from $A$ to $B$ in one step of the random walk starting from the uniform stationary distribution, $\pi$. The bottleneck ratio of the random walk on $H_G$ is:
\[ \Phi_* \leq \Phi(R) = \frac{Q(R,R^C)}{\pi(S)}\,.\]

The Markov chain is the edge flip random walk on $G$, so each state has $|E|$ possible moves. The probability of moving from $R$ to $R^C$ is the total number of edges from $R$ to $R^C$ in $H_G$ over $|E|\cdot|S|$. Each orientation of $G$ in $F$ comes from a recursive choice of sources where the first $n/3$ sources are from $s$, the intersection of the maximal cliques, followed by an independent recursive choice of sources in $t_1\setminus s$ and $t_2\setminus s$. From such an orientation, there is a single edge into $R$ corresponding to flipping the edge between the source chosen last in $s$ and first in $t_1\setminus s$. Therefore, the number of edges from $R$ to $R^C$ is $|F| = (n/3)!^3$. The probability of $S$ under the stationary distribution is $|S| = (2n/3)!(n/3)! - (n/3)!^3$ over the number of orientations of $G$. By inclusion-exclusion, the number of orientations is $|C_1| + |C_2| - |F| = 2(2n/3)!(n/3)! - (n/3)!^3$\,.

 \[ \frac{Q(R,R^C)}{\pi(S)} = \frac{1}{|E|}\frac{(n/3)!^3}{(2n/3)!(n/3)!}\frac{2(2n/3)!(n/3)! - (n/3)!^3}{(2n/3)!(n/3)!-(n/3)!^3}  = \frac{1}{|E|} \frac{1}{{2n/3 \choose n/3}-1}\,.\]

Using Stirling's approximation, ${2n/3 \choose n/3} \approx \frac{1}{\sqrt{\pi n}} 4^n$. 
For $n \geq 4$, $4^n \leq \sqrt{n/\pi} 4^n - n$, and $\Phi(S) \leq 4^{-n}$.

By Theorem~7.3 of \cite{LPW}, we have $t_{\mix} \geq  \frac{1}{4} \Phi_{\*}$. This means for this $G$, $t_{\mix} \geq 4^{n-1}$.
\end{proof}

We will use a decomposition theorem of Madras and Randall (see Theorem~1.1 in \cite{MR1910641}) to get an upper bound on the mixing time. Given a reversible Markov chain $P$ on $\Omega = \bigcup_{i=1}^m A_i$ with stationary distribution $\pi$, they construct two types of Markov chains: for each $i$, a ({\em restriction}) Markov chain restricted to each subset $A_i$, and a ({\em projection}) Markov chain on $m$ states, $a_1, a_2, \ldots , a_m$, with $a_i$ representing the set $A_i$.  The mixing time of these auxiliary chains are used to get a bound on the mixing time for the original chain. Let the Markov chain inside each set be $P_{[A_i]}(x,B) = P(x,B) + 1_{x \in B}P(x, A^c_i)$ for $x \in A_i$ and $B \subset A_i$. To define the chain over the subsets, let $\Theta := \max_{x \in \Omega}|\{i: x \in A_i\}|$. Define the chain over the covering to be:
\[ P_H (a_i,a_j) := \frac{\pi(A_i \cap A_j)}{\Theta \pi[A_i]}\,.\]

\begin{thm}[Madras-Randall]
In the preceeding framework, we have
\[ \Gap(P) \geq \frac{1}{\Theta^2} \Gap(P_H) \left(\min_{i=1,...,m} \Gap(P_{[A_i]})\right).\]
\end{thm}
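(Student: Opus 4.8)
The plan is to argue through the variational (Dirichlet form) characterization of the spectral gap. Writing $\mathcal{E}_P(f,f) = \tfrac12\sum_{x,y}\pi(x)P(x,y)(f(x)-f(y))^2$ and $\operatorname{Var}_\pi(f)$ for the variance, so that $\Gap(P) = \min_{f}\mathcal{E}_P(f,f)/\operatorname{Var}_\pi(f)$, it suffices to fix an arbitrary $f$ and produce the single bound $\operatorname{Var}_\pi(f) \le \Theta^2\big(\Gap(P_H)\min_i\Gap(P_{[A_i]})\big)^{-1}\mathcal{E}_P(f,f)$. The whole argument is an overlapping-cover version of the law of total variance, organized around the block averages $\bar f(a_i) := \mathbb{E}_{\pi_{A_i}}[f]$, where $\pi_{A_i} = \pi(\,\cdot\,)/\pi(A_i)$ is the stationary measure of the restriction chain and $\pi_H(a_i) = \pi(A_i)/Z$, with $Z := \sum_k \pi(A_k) \in [1,\Theta]$, is the stationary measure of $P_H$.

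First I would establish the variance decomposition $\operatorname{Var}_\pi(f) \le \sum_i \pi(A_i)\operatorname{Var}_{\pi_{A_i}}(f) + Z\operatorname{Var}_{\pi_H}(\bar f)$. This is the step where a cover replaces a partition: since each $x$ lies in $n(x) := |\{i : x\in A_i\}| \ge 1$ sets, summing the second moments of $f$ over the blocks overcounts each point by the factor $n(x) \le \Theta$, which is exactly what lets the usual total-variance identity pass to an inequality on a cover.

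Next I would bound the two pieces separately. For the within-block term, the Poincar\'e inequality for each restriction chain gives $\operatorname{Var}_{\pi_{A_i}}(f) \le \Gap(P_{[A_i]})^{-1}\mathcal{E}_{P_{[A_i]}}(f,f)$; and because $P_{[A_i]}$ only converts the moves leaving $A_i$ into self-loops, which contribute nothing to a Dirichlet form, one has $\sum_i \pi(A_i)\mathcal{E}_{P_{[A_i]}}(f,f) = \tfrac12\sum_{x,y}\pi(x)P(x,y)(f(x)-f(y))^2\,|\{i : x,y\in A_i\}| \le \Theta\,\mathcal{E}_P(f,f)$, which produces the first factor of $\Theta$. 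For the between-block term, the Poincar\'e inequality for $P_H$ gives $\operatorname{Var}_{\pi_H}(\bar f) \le \Gap(P_H)^{-1}\mathcal{E}_{P_H}(\bar f,\bar f)$, and here I would bound $\mathcal{E}_{P_H}(\bar f,\bar f)$ back in terms of the within-block variances rather than the original Dirichlet form: for overlapping $A_i,A_j$ and any shared $x\in A_i\cap A_j$ one writes $\bar f(a_i)-\bar f(a_j) = (f(x)-\bar f(a_j))-(f(x)-\bar f(a_i))$, averages over $x\in A_i\cap A_j$, and uses $\pi_H(a_i)P_H(a_i,a_j) = \pi(A_i\cap A_j)/(Z\Theta)$ together with another overlap count to arrive at $\mathcal{E}_{P_H}(\bar f,\bar f) \le O(\Theta/Z)\sum_i\pi(A_i)\operatorname{Var}_{\pi_{A_i}}(f)$, producing the second factor of $\Theta$. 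Chaining these, both contributions to $\operatorname{Var}_\pi(f)$ reduce to $\sum_i\pi(A_i)\operatorname{Var}_{\pi_{A_i}}(f)$, which is at most $\Theta\,(\min_i\Gap(P_{[A_i]}))^{-1}\mathcal{E}_P(f,f)$, and absorbing $Z\le\Theta$ and $\Gap(P_H)\le 1$ collapses the accumulated constants into the stated $\Theta^2$.

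The main obstacle I anticipate is the between-block estimate. The averages $\bar f(a_i)$ are not values of $f$, so $\mathcal{E}_{P_H}(\bar f,\bar f)$ cannot be compared to $\mathcal{E}_P(f,f)$ directly; it must instead be routed through the within-block fluctuations using a point shared by the two overlapping blocks, and the weights $\pi(A_i\cap A_j)$ must be matched precisely against the definition of $P_H$. Threading both factors of $\Theta$ and the normalization $Z$ through this comparison so that the final constant is exactly $\Theta^{-2}\Gap(P_H)\min_i\Gap(P_{[A_i]})$, rather than something merely of the same order, is the delicate bookkeeping in the proof.
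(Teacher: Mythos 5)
First, a point of comparison that matters here: the paper itself contains \emph{no} proof of this statement. It is imported verbatim, with attribution, as Theorem~1.1 of Madras and Randall \cite{MR1910641} and used as a black box in the mixing-time analysis of Section~\ref{mciec}. So your proposal can only be measured against the original Madras--Randall argument, and in fact it reconstructs exactly that argument's skeleton: the variational characterization of the gap, an overlapping-cover version of the law of total variance organized around the block averages $\bar f(a_i)$, Poincar\'e inequalities for the restriction chains and for the projection chain, and a comparison of $\mathcal{E}_{P_H}(\bar f,\bar f)$ back to the within-block variances through points shared by overlapping blocks. The individual steps you propose are all correct: the decomposition $\operatorname{Var}_\pi(f)\le\sum_i\pi(A_i)\operatorname{Var}_{\pi_{A_i}}(f)+Z\operatorname{Var}_{\pi_H}(\bar f)$ holds because the cross terms vanish ($\bar f(a_i)$ is the $\pi_{A_i}$-mean of $f$) and the cover only adds nonnegative terms; the overcounting bound $\sum_i\pi(A_i)\mathcal{E}_{P_{[A_i]}}(f,f)\le\Theta\,\mathcal{E}_P(f,f)$ is right since a pair $x,y$ is counted $|\{i:x,y\in A_i\}|\le\Theta$ times; and your shared-point trick, carried out with the weights $\pi(x)/\pi(A_i\cap A_j)$ and symmetrized over ordered pairs, gives $\mathcal{E}_{P_H}(\bar f,\bar f)\le\frac{2(\Theta-1)}{Z\Theta}\sum_i\pi(A_i)\operatorname{Var}_{\pi_{A_i}}(f)$, which is $O(1/Z)$ --- even better than the $O(\Theta/Z)$ you budgeted for.

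Two items you left hanging do need to be pinned down, and both close within your own framework. First, $\Gap(P_H)\le 1$ is not automatic for an arbitrary reversible chain (it fails whenever $\lambda_2<0$), but it holds here: $\sum_{j\ne i}\pi(A_i\cap A_j)=\sum_{x\in A_i}\pi(x)\left(|\{j:x\in A_j\}|-1\right)\le(\Theta-1)\pi(A_i)$, so every diagonal entry satisfies $P_H(a_i,a_i)\ge 1/\Theta$; since $m\ge\Theta$, the trace of $P_H$ is at least $1$, so the $m-1$ eigenvalues other than $1$ sum to a nonnegative number, forcing $\lambda_2\ge 0$. Second, the final ``collapse of constants'' should be made explicit rather than asserted: combining your three estimates with $\Gap(P_H)\le1$ and $\sum_i\pi(A_i)\operatorname{Var}_{\pi_{A_i}}(f)\le\Theta\bigl(\min_i\Gap(P_{[A_i]})\bigr)^{-1}\mathcal{E}_P(f,f)$ yields $\operatorname{Var}_\pi(f)\le(3\Theta-2)\,\Gap(P_H)^{-1}\bigl(\min_i\Gap(P_{[A_i]})\bigr)^{-1}\mathcal{E}_P(f,f)$, and since $(3\Theta-2)\le\Theta^2$ for every integer $\Theta\ge1$ (because $(\Theta-1)(\Theta-2)\ge0$), the stated bound follows; indeed your route proves the marginally stronger constant $(3\Theta-2)^{-1}$. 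So the proposal is sound: it is a correct reconstruction of the proof in the cited reference, not an alternative to anything argued in this paper.
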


An upper bound on the mixing time of this random walk can be found through an upper bound on the mixing of the Markov chain for each maximal clique and for a constructed biased random walk on the tree from the tree decomposition. The Markov chains for each maximal clique are the adjacent transposition walk, which is well understood. The random walk on the tree will be studied with comparison techniques.

The only impediment to rapid mixing is a quantity relating to the overlap between cliques along a path in the tree decomposition. Let \[o_G := \frac{\sum_i |t_i|! |D_i|}{\min_{(j,k) \in T} |t_j \cap t_k|! |D_{j,k}|}. \]

\begin{thm}
For a graph $G$ on $n$ vertices, let $o_G$ be defined as above. Let $T$ be a tree decomposition with $\Theta= \deg(T)$ being the maximal number of overlapping maximal cliques in $G$. Let $t_{\max{}} = \max_i{ |t_i|}$. Then the spectral gap of the random walk on $H_G$ satisfies:
$$\Gap(H_G) \geq \left(o_G \Theta^3 (|G|-|T|) \diam(T)\right)^{-1} 2\left(1 - \cos(\pi/t_{\max})\right)\,.$$
\end{thm}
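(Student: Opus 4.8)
The plan is to apply the Madras--Randall decomposition theorem with the cover $\{A_i\}$ indexed by the maximal cliques of $G$, where $A_i$ consists of exactly those orientations in which clique $t_i$ is a non-follower, i.e.\ the piece $H_{t_i} \times D_i$ in the construction of $H_G$. With this choice, $\Theta$ is precisely the maximum number of cliques that can simultaneously be non-followers, which is the maximal number of overlapping maximal cliques, matching the definition $\Theta = \deg(T)$. The theorem then reduces the problem to bounding three quantities: the spectral gap of each restricted chain $P_{[A_i]}$, the spectral gap of the projection chain $P_H$ on the tree $T$, and the combinatorial factor $1/\Theta^2$. The target bound then follows by multiplying these estimates together and absorbing the remaining factors into $o_G$ and the stated constants.

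First I would handle the restriction chains. Each $A_i = H_{t_i} \times D_i$ is a Cartesian product of permutohedra: the factor $H_{t_i}$ is the adjacent-transposition walk on $S_{|t_i|}$, and the factors in $D_i$ are smaller such walks. The restricted chain $P_{[A_i]}$ adds the escape probability back as a self-loop, which only helps mixing, so its gap is at least that of the genuine product walk. For a Cartesian product the spectral gap is the minimum of the factor gaps, and the gap of adjacent-transposition walk on $S_m$ is the well-known $2(1-\cos(\pi/m))/\binom{m}{2}$ type estimate; since the largest symmetric group appearing is $S_{t_{\max}}$, the governing factor is $2(1-\cos(\pi/t_{\max}))$, which is exactly the trigonometric term in the statement. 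This step is essentially bookkeeping over known results for the interchange/adjacent-transposition process.

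The heart of the argument is the projection chain $P_H$, which is a weighted random walk on the tree $T$ whose transition from $a_i$ to $a_j$ is $\pi(A_i \cap A_j)/(\Theta\,\pi(A_i))$, with $\pi(A_i) \propto |t_i|!\,|D_i|$ and $\pi(A_i \cap A_j) \propto |t_i \cap t_j|!\,|D_{i,j}|$. The plan is to bound $\Gap(P_H)$ from below by comparison with the simple random walk on $T$: I would use the standard comparison (Diaconis--Saloff-Coste) technique, routing each edge of a canonical path through the tree edges and controlling the congestion by the ratio of the smallest transition weight to the typical stationary weight. This is precisely where the overlap quantity $o_G = \sum_i |t_i|!\,|D_i| \big/ \min_{(j,k)} |t_j \cap t_k|!\,|D_{j,k}|$ enters: it measures how badly the bottleneck edge weight $\min_{(j,k)}|t_j\cap t_k|!\,|D_{j,k}|$ compares against the total weight $\sum_i |t_i|!\,|D_i|$, and it is exactly this ratio that degrades when two large cliques overlap in roughly half their vertices. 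The gap of the simple walk on a tree contributes the factor $\diam(T)^{-1}$ (up to constants), and the diameter of $T$ appears in the final bound for this reason.

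The main obstacle I anticipate is controlling the congestion in this comparison on the tree and tracking how the $\Theta$ factor in the definition of $P_H$, the escape-probability corrections in the restriction chains, and the term $|G|-|T|$ (which should arise as the common degree $|G|-C(G)$ of the product pieces, since $C(G)=|T|$ is the number of maximal cliques) all combine without losing more than the stated powers of $\Theta$. One must be careful that the weights $\pi(A_i)$ and $\pi(A_i\cap A_j)$ used in $P_H$ are consistent with the stationary distribution on $H_G$ inherited from the gluing in the earlier construction, so that inclusion--exclusion over the overlaps does not introduce uncontrolled error. Once the three factors are assembled via the Madras--Randall inequality $\Gap(P) \geq \Theta^{-2}\,\Gap(P_H)\,\min_i \Gap(P_{[A_i]})$ and the loose constants are collected into $o_G\,\Theta^3(|G|-|T|)\diam(T)$, the claimed lower bound on $\Gap(H_G)$ follows.
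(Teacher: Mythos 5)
Your overall architecture matches the paper's: Madras--Randall applied to the cover $A_i = H_{t_i}\times D_i$, a product-chain analysis of the restriction chains (with the factor $|G|-|T|$ correctly traced to the move probabilities, i.e.\ the degrees of the product pieces), and a Diaconis--Saloff-Coste comparison for the projection chain. The problem is in the step you yourself call the heart of the argument. You propose to compare $P_H$ with the \emph{simple random walk on $T$} and assert that ``the gap of the simple walk on a tree contributes the factor $\diam(T)^{-1}$.'' That assertion is false in general: for a path on $m$ vertices the simple walk has gap $1-\cos(\pi/(m-1)) = \Theta(\diam(T)^{-2})$, and in addition the simple walk on $T$ does not have the stationary distribution $\pi(t_i)\propto |t_i|!\,|D_i|$ of $P_H$, so the comparison inequality would pick up further distortion ratios between the two stationary measures. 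Carried out as described, your route yields at best a bound scaling like $\diam(T)^{-2}$ times extra factors, which does not prove the stated inequality.

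The paper's trick is to compare in the opposite direction of ``simplicity'': the benchmark chain is the one-step chain on the \emph{complete graph} over the cliques, $\tilde P_T(t_i,t_j)=\pi(t_j)$, which has the same stationary distribution as $P_H$ and spectral gap $1$. Each complete-graph pair $(t_k,t_l)$ is routed through the unique tree path $\gamma_{k,l}$, of length at most $\diam(T)$, and the congestion constant is
\[
A \;=\; \max_{(t_i,t_j)\in T}\ \frac{1}{P_T(t_i,t_j)\pi(t_i)}\sum_{k,l:\,(t_i,t_j)\in\gamma_{k,l}}|\gamma_{k,l}|\,\pi(t_k)\pi(t_l)\;\le\; o_G\,\Theta\,\diam(T),
\]
using the identity $P_T(t_i,t_j)\pi(t_i)=|t_i\cap t_j|!\,|D_{i,j}|\big/\bigl(\Theta\sum_i |t_i|!\,|D_i|\bigr)\ \ge\ (o_G\Theta)^{-1}$ together with $\sum_{k,l}\pi(t_k)\pi(t_l)\le 1$. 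This is why the diameter enters only linearly, and why $o_G$ appears as an exact edge-weight ratio rather than a heuristic bottleneck measure. With $\Gap(P_H)\ge (o_G\Theta\diam(T))^{-1}$ in hand, your assembly via $\Gap(P)\ge \Theta^{-2}\,\Gap(P_H)\,\min_i\Gap(P_{[A_i]})$ gives exactly the stated bound. (A minor point: the gap of the adjacent-transposition walk on $S_m$ is $\frac{2}{m-1}\bigl(1-\cos(\pi/m)\bigr)$ by Bacher's result, not a $\binom{m}{2}$-normalized quantity; combined with the probability $(|t_i|-1)/(|G|-|T|)$ of moving in a given factor, this is what produces the term $\frac{2}{|G|-|T|}\bigl(1-\cos(\pi/t_{\max})\bigr)$.)
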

\begin{proof}
Using the decomposition theorem of Madras and Randall \cite{MR1910641}, an upper bound on the mixing time of the random walk on $H_G$ can be obtained by understanding the mixing of a random walk $P_{A_i}$ on each component $H_{t_i} \times D_i$ and the mixing of a Markov chain on the tree from the tree decomposition of $G$ into maximal cliques. 

The random walk on the pieces $H_{t_i} \times D_i$ are Cartesian products of the adjacent transposition walk on the symmetric group. Random walk on the Cartesian product is the product chain on the components. The spectral gap for the product $\tilde{\Gamma} = \Gamma_1 \times ... \times \Gamma_d$ with the chance of moving in the $j$th chain being $w_j$ and the spectral gap of the $j$th chain being $\gamma_j$ is:
\[ \tilde{\gamma} = \min_j w_j \gamma_j.\]
Here the spectral gap of the random walk on $H_{K_i}$ is $\frac{2}{i-1}(1 - \cos(\pi/n))$ by a result of Bacher \cite{MR1283297}. Note that the degree of the vertices in $H_{K_i}$ (or Cartesian products thereof) is the same as the degree of $K_i$ (or Cartesian products thereof), namely $i-1$. Therefore, the chance of making a move in a component of the product chain is the size of that clique minus one over the number of vertices minus the number of cliques.
This gives that $\tilde{\gamma}$ is at most:
\[\Gap(P_{A_i}) = \tilde{\gamma} \leq \frac{2}{|G| - |T|}\left(1 - \cos(\pi/t_{\max})\right).\]

The Markov chain on the tree decomposition into cliques has transition probabilities  constructed as follows. Let $\Theta$ denote the maximum degree of the tree. For two cliques $t_i,t_j$ adjacent in the tree, \[P_T(t_i,t_j) = \frac{\pi ( H_{t_i \cap t_j} \times D_{i,j})}{\Theta \  \pi(H_{t_i} \times D_i)}= \left(\Theta { |t_i| \choose |t_i \cap t_j|}\right)^{-1}.\] It has stationary distribution $\pi(t_i) = |t_i|! |D_i| z^{-1}$ where $z = \sum_i |t_i|! |D_i|$.

The spectral gap of this Markov chain on the maximal cliques will be bounded using comparison to a Markov chain on the complete graph with vertices the maximal cliques with the same stationary distribution. For all $i,j$ let $\tilde{P_T}(t_i,t_j) = \pi(j)$. This Markov chain mixes in one step.

The comparison technique of Diaconis-Saloff-Coste \cite{MR1245303}, is in terms of $A$ below, where $\gamma_{k,l}$ is the unique path in $T$ from $t_k$ to $t_l$:

\[ A = \max_{(t_i,t_j) \in T} \frac{1}{P_T(t_i,t_j)\pi(t_i)} \sum_{k,l: (t_i,t_k) \in \gamma_{t_k,t_l} } |\gamma_{k,l}| \pi(t_k)\pi(t_l).\]

To simplify this, note $|\gamma_{k,l}| \leq \diam(T)$, $\sum_{k} \pi(t_k) = 1$ and hence $\sum_{k,l} |\gamma_{k,j}| \pi(t_k) \pi(t_l) \leq  \diam(T)$. Additionally, $P_T(t_i,t_j)\pi(t_i) = \frac{ |D_{i,j}| |t_i \cap t_j|! }{\left(\sum_i |t_i|! |D_i|\right) \Theta } = \frac{1}{o_G \Theta}$. Therefore,

\[ A \leq o_G \Theta \diam(T).\]

This gives for the largest non-trivial eigenvalue $\beta_1$ of $P_T$, $\beta_1 \leq 1 - \frac{1}{A}$, and \[\Gap(P_T) \geq \frac{1}{A} \geq \left( o_G \Theta \diam(T)\right)^{-1}.\]

The result \cite{MR1910641} gives $\Gap(P) \geq \frac{1}{\Theta^2} \Gap(P_H) \min_i \Gap(P_{A_i}) $, which from the bounds above gives:

\[ Gap(P) \geq (o_G \Theta^3 (|G|-|T|) \diam(T))^{-1} 2\left(1 - \cos(\pi/t_{\max})\right). \]

\end{proof}


\begin{cor}
When all the maximal cliques in $G$ are the same size $t$ and all intersecting cliques intersect along $s$ vertices, then $o_G = |T| {t \choose s }$ and the spectral gap satisfies:

\[Gap(P) \geq |T|(|G|-|T|)\Theta^3 {t \choose s} 2\left(1 - \cos(\pi/t)\right)\,. \]
\end{cor}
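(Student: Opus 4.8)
The plan is to specialize the general spectral-gap bound from the preceding theorem to the uniform setting where every maximal clique has size $t$ and every pair of intersecting cliques meets in exactly $s$ vertices. The main work is simply to evaluate the quantity $o_G$ under these hypotheses, since the theorem already packages everything else (the factor $\Theta^3$, the $(|G|-|T|)$ term, and the $2(1-\cos(\pi/t_{\max}))$ factor with $t_{\max}=t$) into a ready-to-use form.

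First I would unwind the definition
\[ o_G = \frac{\sum_i |t_i|!\,|D_i|}{\min_{(j,k)\in T} |t_j\cap t_k|!\,|D_{j,k}|}. \]
The key observation is that under the uniform assumption the numerator and the dilation factors become highly symmetric. Since each $|t_i|=t$, the numerator is $t!\sum_i |D_i|$. For the denominator, each intersection has $|t_j\cap t_k|=s$, so $|t_j\cap t_k|! = s!$, and I would show that the gluing-face dilation $D_{j,k}$ relates to $D_j$ in a way that makes the ratio $|D_i|/|D_{j,k}|$ collapse. Recall from the construction that $D_{i,j} = H_{t_i\cap t_j^c}\times D_i$, so $|D_{i,j}| = (t-s)!\,|D_i|$ because $|t_i\cap t_j^c| = t-s$ and the associated permutohedron $H_{K_{t-s}}$ has $(t-s)!$ vertices. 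This is the step I would expect to require the most care: one must check that the uniform-intersection hypothesis forces all the $|D_i|$ to be equal (call the common value $|D|$), so that both numerator and denominator factor cleanly.

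Granting that $|D_i|=|D|$ for all $i$, the numerator becomes $|T|\,t!\,|D|$ and the denominator becomes $s!\,(t-s)!\,|D|$, whence
\[ o_G = \frac{|T|\,t!\,|D|}{s!\,(t-s)!\,|D|} = |T|\,\frac{t!}{s!\,(t-s)!} = |T|\binom{t}{s}, \]
which is exactly the claimed value. Substituting $o_G = |T|\binom{t}{s}$ and $t_{\max}=t$ into the theorem's bound $\Gap(P)\geq (o_G\,\Theta^3(|G|-|T|)\diam(T))^{-1}\,2(1-\cos(\pi/t_{\max}))$ then yields the corollary's inequality. (I note that the corollary as stated has the $\diam(T)$ absorbed and the inequality written with the factors multiplied rather than inverted; I would present the final line so that it matches the theorem's form, i.e. with $o_G\Theta^3(|G|-|T|)\diam(T)$ in the denominator.)

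The main obstacle is therefore entirely bookkeeping about the dilation polytopes $D_i$: verifying that in the uniform case every clique contributes the same product of permutohedra, and that the face $D_{i,j}$ along which adjacent pieces are glued differs from $D_i$ precisely by the factor $(t-s)!$ coming from $H_{K_{t-s}}$. Once this symmetry is established the cancellation is immediate and the rest is direct substitution into the previous theorem, with no further estimation needed.
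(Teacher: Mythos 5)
Your proposal is correct and is exactly the specialization argument the paper intends (the corollary is stated there without proof): under the uniform hypotheses every dilation has the same size $|D_i| = \bigl((t-s)!\bigr)^{|T|-1}$ and $|D_{j,k}| = (t-s)!\,|D_j|$, so the cancellation gives $o_G = |T|\,t!/(s!\,(t-s)!) = |T|\binom{t}{s}$, and the bound follows by substitution into the preceding theorem. You are also right to flag the displayed inequality: as printed in the corollary it is a typo, and the correct form is
\[
\Gap(P) \;\geq\; \left(|T|\binom{t}{s}\,\Theta^3\,(|G|-|T|)\,\diam(T)\right)^{-1} 2\left(1-\cos(\pi/t)\right),
\]
with the factors inverted and $\diam(T)$ retained, exactly as you wrote.
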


The extreme cases of $G$ being a complete graph or a tree fall into the purview of this corollary, and the bound on the spectral gap is within a constant of what it should be.

\section{Posets and Essential Graphs}\label{paeg}

A poset, or partially ordered set, is a combinatorial object defined on a set $X$ with a set of relations $P$.  The relations form a partial order in that they are reflexive (for all $x \in X$, $x \leq x$), antisymmetric (not both $x \leq y$ and $y \leq x$ if $y \neq x$), and transitive (if $x \leq y, y\leq z$, then $x \leq z$). A few terms from posets are useful in the looking at essential graphs. We say $y$ covers $x$ if $x < y$ with no $z$ so that $x<z<y$. A chain in a poset is a set $C \subseteq X$ with all elements of $C$ totally ordered by the poset. The height of a point in a poset is the size of the largest chain with that point as the highest point in that total order.

Every DAG can be reduced to a poset by establishing the relations $v < w$ if $w$ is reachable from $v$. For each labeled poset $P$, it is straightforward to count the number of essential graphs with only directed edges that reduce to it as well as the number of DAGs. We recall here that a relation $x<y$ in a poset $P$ is called a {\em cover} relation, if there is no $z$ such that $x < z< y$ in $P$.
\begin{prop}
The number of DAGs is
\[ \sum_{P} \prod_{v \in P} (2^{d(v) - c(v)}),\]
where the sum is over labeled posets, $d(v)$ is the number of poset elements below $v$ in the order, and $c(v)$ is the number of elements covered by $v$.
\end{prop}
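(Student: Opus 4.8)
The plan is to partition the set of all labeled DAGs according to the poset each one reduces to, count the DAGs in each block exactly, and then sum over posets. Recall that a DAG $D$ reduces to the poset $P$ whose strict order is the reachability relation, so that $v < w$ in $P$ iff there is a directed path from $v$ to $w$ in $D$; in other words $P$ is the transitive closure of $D$. Each DAG reduces to exactly one poset, so the blocks $\{D : \text{the transitive closure of } D \text{ equals } P\}$ partition the DAGs as $P$ ranges over labeled posets on the vertex set. It therefore suffices to show that, for a fixed $P$, the number of DAGs reducing to $P$ equals $\prod_v 2^{d(v)-c(v)}$.

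First I would characterize which DAGs reduce to a given $P$. Since every edge $u \to w$ of $D$ forces $u < w$ in the transitive closure, the condition that the transitive closure be contained in $P$ is equivalent to requiring that every edge of $D$ be a relation of $P$; note this automatically makes $D$ acyclic, as all edges increase in $P$. For the reverse containment, observe that each relation of $P$ is a concatenation of cover relations, so $P$ is generated by its Hasse diagram; thus it is enough that every cover relation of $P$ appear in the transitive closure of $D$.

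The key step, and the one requiring the most care, is to argue that each cover relation $x \lessdot y$ must in fact be a direct edge of $D$. Indeed, if the transitive closure of $D$ realized $x < y$ through a directed path $x = z_0 \to z_1 \to \cdots \to z_k = y$ with $k \geq 2$, then since every edge is a relation of $P$ we would get $x < z_1 < y$ with $z_1 \notin \{x,y\}$, contradicting that $x \lessdot y$ is a cover. Hence the cover edges of $P$ are forced to lie in $D$, while each non-cover relation of $P$ may independently be included as an edge or omitted; any such choice keeps the transitive closure exactly equal to $P$, since the forced cover edges already generate $P$ and no edge exceeds $P$. Consequently the DAGs reducing to $P$ are in bijection with subsets of the set of non-cover relations, so there are $2^{r}$ of them, where $r$ is the number of non-cover relations of $P$.

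It then remains to evaluate $r$. Summing over $v$ the number $d(v)$ of elements below $v$ counts each strict relation $u < v$ exactly once, so the total number of relations is $\sum_v d(v)$; likewise $\sum_v c(v)$ counts the cover relations. Hence $r = \sum_v (d(v) - c(v))$ and the number of DAGs reducing to $P$ is $2^{\sum_v (d(v)-c(v))} = \prod_v 2^{d(v)-c(v)}$. Summing this over all labeled posets $P$ yields the stated formula.
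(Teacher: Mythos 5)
Your proof is correct and follows essentially the same approach as the paper: partition the DAGs by their reachability poset, observe that cover relations are forced to be edges while the remaining down-set relations are free binary choices, and count. You simply supply the verification details (why covers must be direct edges, and why any choice of non-cover relations keeps the transitive closure exactly $P$) that the paper's terse argument leaves implicit.
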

\begin{proof}
One can construct the DAGs that have a specific reachability poset by looking at the down set and cover relations of each point of the poset. The points covered by a point $v$ are obliged to have directed edges to $v$. All other elements of the descent set have the option to have a directed edge to $v$. 
\end{proof}

Surprisingly, satisfying the three conditions to protect the edges is straightforward in this setting.

\begin{prop}
The number of Markov equivalence classes of size one is:
\[ \sum_{P} \prod_{v \in P} (2^{d(v) - c(v)} - 1_{c(v) = 1}),\]
where the sum is over labeled posets, $d(v)$ is the size of the down set of $v$, and $c(v)$ is the number of elements covered by $v$.
\end{prop}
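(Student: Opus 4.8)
The plan is to refine the proof of the preceding proposition. Recall from the discussion preceding Proposition~\ref{protected} that a Markov equivalence class has size one exactly when its essential graph is a DAG all of whose edges are protected. So, fixing a labeled reachability poset $P$, I would count those DAGs reducing to $P$ in which every edge is protected, show the count equals $\prod_{v \in P}(2^{d(v)-c(v)} - 1_{c(v)=1})$, and then sum over $P$. As in the previous proof, a DAG reducing to $P$ is specified by choosing, at each vertex $v$, its parent set $Pa(v)$: the $c(v)$ covered elements are forced parents, while each of the remaining $d(v)-c(v)$ elements of the down set is an optional parent, contributing a subset $S_v \subseteq N(v)$, where $N(v)$ denotes the non-covered elements of the down set. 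By Proposition~\ref{protected}, an edge $u \to v$ fails to be protected precisely when $Pa(u) = Pa(v)\setminus\{u\}$, equivalently $Pa(v) = Pa(u)\cup\{u\}$.

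The key step is to localize the obstruction to a single vertex: I claim the only edges that can be unprotected are cover edges $u \lessdot v$ at vertices with $c(v)=1$. Suppose $u \to v$ is unprotected. If $v$ covered a second element $w \neq u$, then $w \in Pa(v) = Pa(u)\cup\{u\}$ forces $w \in Pa(u)$, so $w < u < v$, contradicting that $w$ is covered by $v$; this rules out every cover edge except when $c(v)=1$. If instead $u \to v$ were an optional (non-cover) edge, a saturated chain $u = x_0 \lessdot \cdots \lessdot x_k = v$ with $k \geq 2$ produces a cover $x_{k-1} \neq u$ of $v$ with $x_{k-1} \in Pa(v) = Pa(u)\cup\{u\}$, forcing $x_{k-1} < u < x_{k-1}$, a contradiction. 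Hence only the cover edge into a $c(v)=1$ vertex is ever at risk, so vertices with $c(v)\neq 1$ contribute the full factor $2^{d(v)-c(v)}$.

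For a vertex $v$ with unique covered element $u$, we have $Pa(v) = \{u\}\cup S_v$, and the cover edge is unprotected iff $S_v = Pa(u)$. Since every element of $Pa(u)$ lies strictly below $u < v$ and differs from $u$, the lone covered element of $v$, we get $Pa(u) \subseteq N(v)$, so $Pa(u)$ is one of the $2^{d(v)-1}$ admissible values of $S_v$. Thus exactly one choice at $v$ is forbidden, giving the factor $2^{d(v)-1}-1$. To assemble the count I would process the vertices in a linear extension of $P$: when we reach $v$, the parent sets of all lower vertices, in particular $Pa(u)$, are already fixed, so the forbidden value of $S_v$ is a single well-defined element of the option set, and the number of admissible choices at $v$ is $2^{d(v)-c(v)} - 1_{c(v)=1}$ regardless of the earlier choices. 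Multiplying over $v$ and summing over labeled posets $P$ yields the formula.

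The point requiring care, and the main obstacle, is precisely this factorization: the forbidden value $Pa(u)$ depends on choices made at strictly lower vertices, so a naive argument might worry that the ``bad'' configuration sometimes lies outside the current option set and sometimes inside it, spoiling the uniform subtraction of $1$. The containment $Pa(u) \subseteq N(v)$ established above is exactly what resolves this: it guarantees that for every setting of the lower vertices the forbidden value is an available option, so precisely one option is removed at each $c(v)=1$ vertex and the per-vertex factors are genuinely independent. Edge cases such as $d(v)=c(v)=1$ (forcing $Pa(u)=\emptyset=S_v$, hence a zero factor) should be checked to confirm they are consistent with the formula.
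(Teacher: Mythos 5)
Your proof is correct and takes essentially the same route as the paper: fix a labeled reachability poset, treat cover relations as forced parents and the rest of the down set as optional parents, and use Proposition~\ref{protected} to show that the only possible protection failure is the cover edge into a vertex with $c(v)=1$, which forbids exactly one choice there. The two details you work out explicitly --- that non-cover edges are automatically protected, and that the forbidden parent set always lies inside the option set so the per-vertex factors multiply independently --- are left implicit in the paper's proof, so your write-up is a more careful version of the same argument rather than a different one.
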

\begin{proof}
From a labeled poset $P$, we will count all the essential graphs with only essential edges that reduce to it. The DAG must include all the cover relations in $P$. For each point $v$ in the poset, a choice can be made to include or not a directed arrow to $v$ from any $u$ in the down set of $v$ that is not covered by $v$. If $d(v)$ is the size of the down set of $v$, and $c(v)$ is the number of elements covered by $v$, this gives $2^{d(v)-c(v)}$ ways to have edges come into $v$. These edges are protected as long as the condition $\{w| w \rightarrow u\} \neq \{w | w\neq u, w \rightarrow v\}$ is met. If $v$ covers at least two other points, then no other vertex coming from the down set of $v$ can have an incoming edge from both these points, since they must be incomparable in the poset. If $v$ only covers a single point, then there is a unique way for $\{w| w \rightarrow u\} \neq \{w | w\neq u, w \rightarrow v\}$ to fail: If $u$ is the point that $v$ covers and they have edges from exactly the same vertices. This gives $2^{d(v) - c(v)} - 1_{c(v) = 1}$ ways to have protected directed edges coming into $v$.
\end{proof}

\subsection{Exact enumeration and discussion}

The ratio of essential graphs to DAGs and essential graphs with only directed edges to DAGs is of interest to determine the limit of increased efficiency in working with essential graphs versus DAGs. Using essentially the same observation above, Steinsky~\cite{MR1997903} uses inclusion-exclusion to get a recursive formula for $a_n$, the number of essential graphs with only directed edges, otherwise known as essential DAGs. The inclusion-exclusion works to add a set of $i$ maximal vertices and connect $n − i$ lower vertices to them arriving at the formula, \[a_n = \sum_{i=1}^n (−1)^{i+1}{n \choose i}\left(2^{n-i}-(n-i)\right)^i a_{n-i}\,.\] This is done in the style of Robinson's recursive formula \cite{Robinson1977} for the number of DAGs,\[a_n' = \sum_{i=1}^n (−1)^{i+1}{n \choose i}2^{i(n-i)}a_{n-i}'\,.\] Steinsky has a second paper published in 2013 on enumerating Markov equivalence classes\cite{MR3112770}.

While they are both exceedingly useful for computation, the alternating quality of both formulas makes asymptotic analysis a challenge.  While the poset construction of a formula for $a_n$ would be next to useless for computing $a_n$ for large n, its all positive structure makes it more amenable to inform the ratio of $a_n'/a_n$.

Steinsky computed that the ratio for $n \leq 300$. By $n = 200$, the first 45 decimal places appear to have stabilized, so $a_n'/a_n = 13.6517978587767...$ 

The $q$-Pochammer symbol appears to be playing a role in this constant, as shown below by looking at certain families of posets. The ratio of the $q$-Pochammer symbol that appears and Steinsky's approximations, $a_n'/a_n \left(\frac{1}{2},\frac{1}{2}\right)_{n-2}$, is just under $4$ at $3.94...$

The largest contribution of an unlabeled poset to the formulas above is for the total orders, where there are $n!$ ways to label the poset. This family also has the largest number of DAGs per reachability poset at $\prod_{i=2}^n 2^{i-2} = 2^{n-1 \choose 2}$, since this maximizes the down set and minimizes the covered points. However, none of these DAGs form an essential DAG, since none of these edges is protected. One of the next largest contributions to the DAG formula is from the unlabeled poset that form a total order aside from two incomparable elements at the bottom of a linear order. There are $n!/2$ ways to label this poset and $\prod_{i=4}^n 2^{i-2}$ ways to construct a DAG from it. Now, a large number of these are essential graphs. Specifically,  $\prod_{i=4}^n (2^{i-2}-1)$ of them. This proportion is:

\[ \prod_{i=4}^n \frac{ 2^{i-2} - 1}{2^{i-2}} = \prod_{i=4}^n \left( 1 - \left(\frac{1}{2}\right)^i\right) = 2\left(\frac{1}{2},\frac{1}{2}\right)_{n-2},\]

where $(a,q)_n = \prod_{i=0}^{n-1}(1-aq^i)$ is the $q$-Pochammer symbol. 

This ratio is quite a bit off from the ratio of essential graphs with only directed edges to DAGs, but by combining the two classes together, one gets a lot closer.

There are four times as many DAGs from linear orders as the DAGs from our almost linear orders. This gives instead of $2\left(\frac{1}{2},\frac{1}{2}\right)_{n-2}$ as a ratio, $\frac{2}{5}\left(\frac{1}{2},\frac{1}{2}\right)_{n-2}$. Steinsky's computations would match up with a leading fraction of $\approx \frac{1}{3.94}$.

There should be a natural way to link up posets that lead to essential graphs and those that do not that results in ratios that closely approximate the real ratio.  For instance, all essential graphs on only directed edges have in their reachability poset at least two points of height $1$ covered by each point of height $2$ (and all points of height $2$ cover at least two points height $1$). By taking any pair of these height $1$ points and covering one with the other, one arrives at a poset not giving an essential graph. Moreover, for a poset with height $2$ points not covering at least two height $1$ points, there is a clear map to essential graph posets - make those two points incomparable. Moreover, since the $q$-Pochammer symbol that appears will change as the posets get wider, the constant may be easier to achieve than it first appears. 


To extend this work to understanding the ratio of DAGs to all essential graphs, one needs to understand how undirected edges can be added as well as the directed edges. The undirected edges are not as independently placeable as the directed edges due to the restriction that they must form a chordal graph. Work has been done on counting the size of the equivalence class given a fixed set of undirected edges in \cite{MR2253771, 2016arXiv161007921H } and sampling inside the equivalence class in Section~\ref{mciec}. This is an attractive question in that it depends only on the undirected edges present and not on any directed edges. It may also be beneficial to study how the undirected edges can be placed on top of the directed edges in the poset model.

Undirected edges can be added to an essential graph with only directed edges as follows. Edges are only allowed between vertices $u,v$ with directed edges coming in from exactly the same other vertices. This prevents the disallowed induced subgraphs of Figure~\ref{av}. Care must also be taken to keep the edges coming out of these vertices protected. Using protected edge condition Figure \ref{pd} (d), the edges coming into $v$ are protected after the addition of undirected edges if at least two of the vertices with edges to $v$ have no undirected edge between them. On top of these protection conditions, the undirected edges must also be chordal. It seems like one of the alternate definitions of chordal might be more tractable. For instance, this definition seems more naturally suited to recursive constructions: a graph is chordal if its vertices are partitionable into three sets $A,C,B$ with the induced subgraph on $C$ complete and no edges with one end in $A$ and the other in $B$. 


\section{A Markov Chain on Essential Graphs}\label{omc}

Essential graphs are difficult to count or generate exactly beyond about $20$ vertices. He, Jia, and Yu~\cite{MR3127848} created a reversible Markov chain on essential graphs that can be used to sample uniformly from the essential graphs on a large number of vertices. It is designed to function under a sparsity condition of at most constant density. In order to produce a non-lazy chain, the authors choose to only allow moves that give a different essential graph. This has the effect of giving a non-uniform stationary distribution and making the chain harder to describe. Here, a lazy version of their chain is used in order to make formulas more explicit.

He, Jia, and Yu ultilize $6$ moves  on the state space of essential graphs on $n$ vertices. The first four are changes in one edge. A random pair of vertices are selected uniformly at random. Then a choice is made to attempt to add a directed edge, remove a directed edge, add an undirected edge, or remove an undirected edge. Adding an edge is only allowed if an edge is not already present. Removing an edge is similarly only allowed if that type of edge is already present. Furthermore, the resulting graph need not be an essential graph itself, but must be extendable by directing edges to give a DAG. The graph is modified to be the essential graph of that DAG. A move to add an edge is furthermore only permissible if that edge remains the type added in the essential graph. These additional factors of correcting the graph to be essential are done in order to make the chain reversible.  The final two moves are to pick three vertices $a,b,c$ and add or remove an immoral. An immoral can only be added if there are undirected edges $a-b$ and $b-c$ with $a$ and $c$ not adjacent. As before, the graph is then corrected to be an essential graph if possible or the move is rejected. To remove an immoral at $b$, there must be directed edges $a \rightarrow b$ and $c \rightarrow b$ with no edge from $a$ to $c$. This is changed by the move to undirect the edges, again with the restriction that it repairs to an essential graph with those edges still undirected.


He, Jia, and Yu show the chain is connected by showing an iterative procedure of removing all directed edges followed by removing some directed edge or a $v$-configuration, thus prescribing a path in the chain from any essential graph to the empty graph. The poset relationship described above can be used to give an explicit order in which edges can be removed in order to get to the empty graph. Such a procedure could be of use in a mixing time analysis of the chain.

\begin{itemize}
\item Remove all undirected edges. 
\item Starting with the maximal elements at height $\geq 2$ of the poset for the essential graph, remove all directed edges into the maximal elements one at a time in an order that keeps them from having the same incoming edges as their children. If a maximal element has multiple children this can be done by removing the edges from the children last (removing any children at height $1$ first). If there is a single child, remove all incoming edges in common with that child first. Recursively continue with the new maximal elements.
\item Continue this until only immorals consisting of elements at height $1$ and $2$ are left. Remove these by first removing all possible directed edges as singletons, then for each immoral, turning the immorals into undirected edges and removing those. 
\end{itemize}

\begin{prop}
Every move in the above procedure is a move in the Markov chain proposed by He et al~\cite{MR3127848}.
\end{prop}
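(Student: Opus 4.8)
The plan is to check each operation of the three-part procedure against the list of six allowed transitions and to verify that the graph it produces is exactly what the chain would return. Every operation is a removal, so the only preconditions to check are that the relevant directed edge, undirected edge, or immoral configuration is present, which holds by construction. The real content is that after each removal the resulting PDAG is extendable to a DAG and that its essential-graph correction returns precisely the graph with that single change. I would reduce all of this to one invariant: at every stage of the procedure the current graph is itself a valid essential graph. Granting the invariant, extendability is automatic (an essential graph represents a nonempty Markov equivalence class), the correction step acts as the identity, and each removal is a legitimate single move of the chain. The task then becomes preserving the four conditions characterizing essential graphs --- no partially directed cycle, chordal undirected part, no induced subgraph of Figure~\ref{av}, and strong protection of every directed edge --- across each removal.

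I would handle the three parts in turn, postponing the first. For the second and third parts the undirected edges are already gone, so we are deleting edges from an essential DAG and the governing criterion is exactly Proposition~\ref{protected}: a directed edge $u \to v$ remains protected if and only if $\{w : w \to u\} \neq \{w : w \neq u,\ w \to v\}$. Deleting a directed edge can neither create an immorality nor disturb chordality, so the only danger is that a surviving edge loses protection. I would run a top-down induction on the reachability poset: when clearing the in-edges of a maximal element $v$ of height at least two, the prescribed order --- delete the edges from the covered children last, height-one children first, when $v$ has several children, and delete the common in-edges first when $v$ has a single child --- is precisely the rule that stops any remaining parent $u$ of $v$ from acquiring $v$'s parent set. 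Proposition~\ref{protected} then certifies that every surviving edge into $v$ stays protected; edges not incident to $v$ are untouched, so the graph remains an essential DAG, and one recurses on the poset in which $v$ has become isolated. The third part disposes of the leftover height-one/height-two immoralities by first removing the non-immoral singleton edges, then applying the remove-immoral move to turn each $a \to b \leftarrow c$ into $a-b-c$, and finally deleting those two undirected edges, checking at each step the immorality precondition that $a$ and $c$ are nonadjacent and that chordality is trivially maintained.

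The first part, deleting all undirected edges, needs a separate ordering argument, since a directed edge may be protected only through the mixed configuration of Figure~\ref{pd}(d), which relies on undirected edges; deleting an undirected edge in the wrong order could unprotect such a directed edge and force the correction step to re-orient edges, so that the removal would cease to be a single clean move. I would empty each chordal undirected component along a perfect elimination order, peeling off the undirected edges at a simplicial vertex, and argue that this preserves both chordality and the Figure~\ref{pd}(d) protection of every incident directed edge until the component vanishes. The step I expect to be the main obstacle is exactly this protection bookkeeping: making the second-part induction rigorous requires showing that the cover/child deletion order rules out the parent-set equality of Proposition~\ref{protected} at every intermediate stage --- using in particular that two elements covered by $v$ are forced to be incomparable, so they cannot both feed a common descendant in the way that would collapse the parent sets --- and then dovetailing this with the Figure~\ref{pd}(d) protection maintained during the undirected-edge phase. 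Once these invariants are in place, matching each step of the procedure to its corresponding chain move is routine.
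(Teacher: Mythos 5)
Your overall strategy is the same as the paper's: maintain the invariant that every intermediate graph is itself an essential graph (so extendability is automatic, the chain's repair step is the identity, and each deletion is a single legal move), clear the undirected part first in a chordality-preserving order, then clear directed edges top-down through the reachability poset via Proposition~\ref{protected}, and finish the height-one/height-two immoralities with the remove-immoral move. Your parts two and three are exactly the paper's argument, though you defer the bookkeeping that the paper actually carries out: with at least two covered children, any remaining child $w_i \neq u$ lies in $v$'s parent set but can never lie in $u$'s parent set (else $w_i < u < v$ would contradict covering), and in the single-child case the last edge $w \to v$ stays protected because $v$ has height at least $3$, so $w$ has height at least $2$ and hence a nonempty parent set.

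The genuine flaw is in your first phase. You claim that deleting undirected edges ``in the wrong order'' could unprotect a directed edge whose only protection is configuration (d) of Figure~\ref{pd}, and you propose to prove that simplicial-vertex peeling ``preserves the Figure~\ref{pd}(d) protection of every incident directed edge until the component vanishes.'' That lemma is false as stated and cannot be repaired in that form: configuration (d) requires two undirected edges at $u$, so it must be destroyed at some point in a phase whose end state has no undirected edges, under any order. (Simplicial peeling does not avoid this: in a (d) configuration $c_1 - u - c_2$ with $c_1 \not\sim c_2$, the vertex $u$ is never simplicial, but $c_1$ may well be, and peeling at $c_1$ deletes $c_1 - u$.) The fact you actually need is order-independent and easier: deleting an undirected edge can never unprotect a directed edge. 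If $u \to v$ was protected by (d) via $c_1 - u - c_2$, $c_1 \to v \leftarrow c_2$, and $c_1 - u$ is deleted, then $u$ and $c_1$ become nonadjacent and $u \to v \leftarrow c_1$ is an immorality, i.e., protection by configuration (b); configurations (a), (b), (c) involve only directed edges and nonadjacencies, both of which are preserved by deleting undirected edges. Likewise the forbidden subgraph of Figure~\ref{av} cannot be created, since the deleted edge would have completed a partially directed cycle $x \to y - z - x$ in the previous (essential) graph. Hence the only genuine ordering constraint in phase one is chordality of the shrinking undirected part --- which is the paper's (tersely stated) position --- and your perfect-elimination peeling handles that just as well as the paper's recursive $A$, $C$, $B$ decomposition. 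With the false target lemma replaced by this observation, your plan goes through.
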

\begin{proof}
Of the four conditions for an essential graph, removing undirected edges could only violate the need for the undirected edges to form a chordal graph. Several of the alternate definitions of chordal graphs give that there is an order remove all the undirected edges leaving it chordal at each step. For instance, one definition of chordal is that every chordal graph can be broken up into three sets $A,C,B$ with $C$ non-empty and complete, no edges between $A$ and $B$, and $A$ and $B$ chordal. If $A$ and $B$ are independent, removing edges $A$ to $C$ and $B$ to $C$, followed by the edges adjacent to each vertex in $C$ leaves the graph chordal at each step. Recursively removing the edges from $A$ and $B$, then $A$,$B$ to $C$, then inside of $C$, then gives a way to remove all edges leaving the graph chordal at each step.

For an essential graph with only directed edges, by Proposition \ref{protected} an edge $u \rightarrow v$ is protected if $\{c | c\rightarrow u\} \neq \{c | c\neq u, c \rightarrow v\}$. For a maximal element of the poset $v$, there are no edges coming out of $v$, so removing edges coming into $v$ does not endanger the protectedness of any edges going into other vertices. Its enough to ensure that the edges coming into $v$ are protected at each step of their removal. Let $w_1,...w_r$ be the vertices covered by $v$ in the poset. Each of these is in the set $\{c | c\neq u, c \rightarrow v\}$ when they aren't $u$ but are never in $\{c | c\rightarrow u\} $. As long as there are at least two vertices covered by $v$, all edges into $v$ are protected and all but those two can be removed in any order. Leave one of the edges from the highest vertex, $w_i$, $v$ covers. Since $v$ is height at least $3$, $w_i$ is height at least $2$, and $\{c | c\rightarrow w_i \} \neq \emptyset$. Therefore the edge $w_i \rightarrow v$ is protected if it is the last edge to $v$ left. Remove the other edge, then this edge. 

Suppose instead that $v$ covers a single vertex $w$, with height of $v$ at least $3$. Since the edge $w \rightarrow v$ is protected, $\{c | c\rightarrow w\} \neq \{c | c\neq w, c \rightarrow v\}$. The presence of a vertex in the latter set not in the former would mean $v$ covered at least two vertices. Then the first set contains at least one vertex $u$ not in the other set. Remove all edges to $v$ other than $w \rightarrow v$ in any order, as this edge will remain protected by the presence of $u$ and the presence of $w \rightarrow v$ protects them while they exist. Finally, remove $w \rightarrow v$.

After recursively removing all maximal vertices of height at least $3$, the essential graph left has only vertices of height $1$ with directed edges leading to vertices of height $2$. Each vertex of height two has at least two incoming edges since forming an immoral is the only way such edges can be protected. Prune each immoral down to two edges by removing single edges. Take one immoral $a \rightarrow b \leftarrow c$ with no other edges going to $b$. Then turning it into $a - b - c$ forms an essential graph because no $u \rightarrow v - w$ edges were formed, and all other directed edges remain protected in an immoral. Then these undirected edges can be removed since they are the only undirected edges in the graph. Repeat with the other immorals.
\end{proof}

Define the Hamming distance between two essential graphs to be the number of edges including direction, in which the graphs differ. We can show that using the basic four moves, at most two moves are needed to move between any two essential graphs at Hamming distance one. The chain is not connected by just these moves. The Hamming distance between two essential graphs that are adding or removing an immoral (without changing the direction of other edges), $a \rightarrow b \leftarrow c$ versus $a - b - c$, however suffice by the above procedure. Note, this should also mean the chain would be connected and reversible, if the ``repairing'' moves that do not give essential graphs were left out. 

\begin{prop}
There is a path in the chain of length at most $2$ between any two essential graphs at Hamming distance one. 
\end{prop}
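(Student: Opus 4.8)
The plan is to argue by a case analysis on the single edge $e=\{u,v\}$ at which $G_1$ and $G_2$ disagree, recording the status of $e$ as \emph{absent}, \emph{undirected}, $u\to v$, or $v\to u$, and letting $R$ denote the common remainder of the two graphs. There are two regimes. In the first, $e$ is present in one graph and absent in the other (an insertion/deletion); in the second, $e$ is present in both but of different types (a change of type, either undirected versus directed, or the two directed orientations).

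First I would dispose of the insertion/deletion regime in a single move. If, say, $G_1$ has no edge at $\{u,v\}$ while $G_2$ carries an edge of type $\tau$ there, then applying to $G_1$ the move that adds an edge of type $\tau$ at $\{u,v\}$ produces the partially directed graph $R$ together with $e$, which is literally $G_2$. Since $G_2$ is already an essential graph, the chain's repair step leaves it untouched and the inserted edge keeps its type $\tau$, so this is an admissible move reaching $G_2$ in one step.

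For a change of type I would route through the graph $G_0$ obtained from either $G_1$ or $G_2$ by deleting $e$ entirely, so that $G_0$ has remainder $R$ and no edge between $u$ and $v$. The proposed length-two path is: delete $e$ from $G_1$ to reach $G_0$, then add $e$ back with the type it carries in $G_2$. The two targets are correct bookkeeping, since $G_0$ with $e$ reinstated in its $G_2$-form is exactly $G_2$, and in its $G_1$-form is exactly $G_1$. Hence, provided $G_0$ is itself an essential graph, the deletion repairs trivially to $G_0$ and the addition repairs trivially to $G_2$ with the added edge retaining its type, giving a path of length $2$. The two directed-reorientation and the undirected-versus-directed sub-cases are handled uniformly in this way, the reversibility of the chain letting us traverse $G_1\to G_0\to G_2$ from whichever endpoint is convenient.

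Everything therefore reduces to the key claim that $G_0$ is an essential graph, which I would check against the four Andersson et al.\ conditions. Conditions (1) and (2) are immediate: deleting an edge cannot create a partially directed cycle, and the undirected part of $G_0$ coincides with the undirected part of whichever of $G_1,G_2$ has $e$ directed, hence with a subgraph already known to be chordal. The real content is in (3) and (4), and this is where I expect the main obstacle to lie. The danger is that deleting $e$ could either unprotect a directed edge of $R$ or create a new immorality (an unshielded collider) through $u$ and $v$, and either would spoil essentiality. The way through is to use that \emph{both} $G_1$ and $G_2$ are essential: a common directed edge $x\to y$ is strongly protected in each, and a short inspection of the configurations of Figure~\ref{pd} (with Proposition~\ref{protected} covering the purely directed configurations) should show that whenever a protection in $G_1$ relies on the $G_1$-form of $e$, the graph $G_2$ exhibits a protection of the same edge avoiding $e$, and symmetrically, so that at least one protection always survives the deletion; dually, a collider through $u$ and $v$ that becomes unshielded in $G_0$ would force $e$ to be reversible, so the directed form of $e$ could not be compelled, contradicting that the endpoint having $e$ directed is itself an essential graph. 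Making this interplay precise — disentangling exactly how the protections and shieldings in $G_1$ and in $G_2$ combine to certify essentiality of $G_0$ — is the step I anticipate to be the crux, and the only place where the hypothesis that both endpoints are essential is genuinely used.
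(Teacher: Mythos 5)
Your skeleton coincides with the paper's: the presence/absence cases are dispatched in a single move (the target graph is already essential, so no repair occurs and an added edge keeps its type), and the reorientation case is routed through the intermediate graph $G_0$ obtained by deleting the edge, so that everything reduces to showing $G_0$ is itself an essential graph. The problem is that you do not show it. You verify conditions (1) and (2), which are the trivial ones, and for conditions (3) and (4) you offer only a forecast (``a short inspection \ldots should show'') and explicitly defer ``making this interplay precise'' as the anticipated crux. That deferred step is the entire mathematical content of the proposition, and it is exactly what the paper's proof supplies: first, the fourth protecting configuration of Figure~\ref{pd} can never be the one that uses the edge between $a$ and $b$, since reversing a directed edge in it creates a partially directed cycle; this reduces strong protection to the purely directed criterion of Proposition~\ref{protected}, namely $\{c \mid c \rightarrow u\} \neq \{c \mid c \neq u,\, c \rightarrow v\}$; then one splits on $u=a$ versus $v=a$ and evaluates the two in-neighborhood sets in whichever of the two given orientations ($a \rightarrow b$ or $b \rightarrow a$) makes the disputed edge contribute to neither set, so that the inequality, and hence protection, is unchanged by the deletion. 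Nothing in your sketch identifies this mechanism --- neither the reduction to in-neighborhood sets nor the trick of reading the criterion off the harmless orientation --- so as written the proposal is a plan for a proof rather than a proof.

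Two smaller points. First, your ``undirected versus directed'' sub-case is vacuous: as the paper observes, an edge is undirected in an essential graph exactly when the class contains DAGs orienting it both ways, so two essential graphs cannot differ solely in that one has the edge directed and the other undirected; treating it ``uniformly'' with reorientation aims at a case that never arises. Second, your discussion of condition (3) is off target: the danger in deleting $a \rightarrow b$ is not a newly unshielded collider (immoralities are perfectly legal in essential graphs --- indeed they protect edges) but a newly induced $x \rightarrow y - z$ pattern, which can in principle appear because $a$ and $b$ lose adjacency, e.g.\ if $G_1$ contained $a \rightarrow y$ and $y - b$; such a pattern is excluded precisely because the reversed graph $G_2$ would then contain the partially directed cycle $b \rightarrow a \rightarrow y - b$, contradicting its essentiality. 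Your ``dual'' argument about $e$ being forced reversible does not establish this.
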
 
\begin{proof}
The first statement breaks into three cases. Let $a,b$ be the two vertices at which the graphs differ. Case I is that one graph has no edge $a$ to $b$ while the other has a directed edge, WLOG, $a \rightarrow b$. Case II is that one graph has no edge and the other has an undirected edge $a - b$. Case III is that one graph has a directed edge $a \rightarrow b$ and the other $b \rightarrow a$. It is not possible to differ at a directed versus undirected edge, since by definition an edge in an essential graph is undirected if there is a DAG in which that arrow goes in either direction.

Case I: Since both no edge and the directed edge form essential graphs, the Markov chain moves of add a directed edge and removing a directed edge are legal, and this gives a one step path in either direction under the chain.

Case II: Similarly to Case I, since both no edge and the undirected edge form essential graphs, the Markov chain moves of adding an undirected edge and removing an undirected edge give a one step path between the two essential graphs.

Case III: We will show removing the directed edge and adding it back in the other direction gives a two step path in the chain. This case has the complexity since it is necessary to show that the intermediate move gives an essential graph only differing at that edge. The graph $G$ obtained by removing the directed edge must still be chordal and contain no partially directed cycles since no undirected edges were changed and removing edges can not add a cycle. The third criterion for an essential graph, no $x \rightarrow y - z$, could also not be introduced by removing an edge. It is left to show all the directed edges remain protected. Suppose $a \rightarrow b$ is used as one of the edges in one of the four induced subgraphs that protects that edge $u \rightarrow v$. The fourth induced subgraph cannot be the relevant one, since switching the direction of any of the directed edges gives a partially directed cycle. That means $u \rightarrow v$ is protected by of the first three conditions which by, Proposition \ref{protected}, are equivalent to $\{c| c\rightarrow u\} \neq \{c| c\neq u, c \rightarrow v\}$. The edges $a$ to $b$ are only relevant if one of $u$ or $v$ is $a$ or $b$. Without loss of generality we will check $u=a$ and $v=a$. Suppose $u =a$ and $v \neq b$. In the graph with $a \rightarrow b$, the edge is not in either set and the sets are still not equal. That means with no edge $a$ to $b$ the sets are disjoint and the edge is protected. Suppose $v=a$ and $u \neq b$. In the graph with $b \rightarrow a$, the edge is not counted in either set and the sets are still not equal. That means without the edge $a$ to $b$, the edge $u$ to $v$ is still protected. 

\end{proof}

Unfortunately, the Markov chain does not give short paths between all essential graphs with Hamming distance two. For instance, the graph in figure \ref{coune}, where $I_n$ stands in for an independent graph on $n$ vertices.

\begin{figure}
	\centering
		\includegraphics{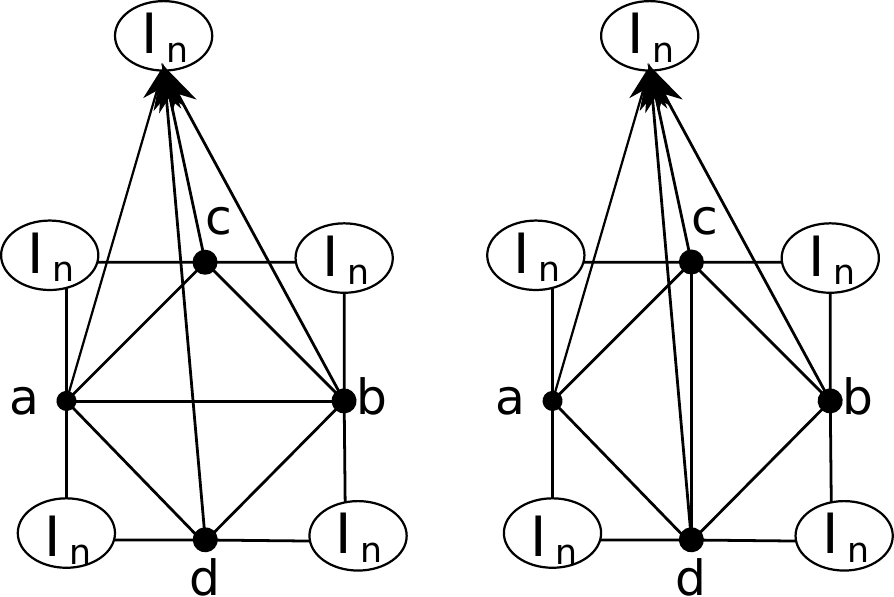}
		\caption{Hamming distance two counter example}\label{coune}
\end{figure}

First note, neither the graph with both $a-b$ and $c-d$ and neither $a-b$ not $c-d$ is an essential graph. The first because in order to protect the directed edges, there must be two non-adjacent vertices. The second, because then the undirected edges would have a four-cycle and fail to be chordal. There are two general approaches to get around this, either add more directed edges going up to the independent graph or delete/add undirected edges. In order to protect all the directed edges with both $a-b$ and $c-d$ present, an extra directed edge would have to be added going to each for the vertices in the independent set. This requires at least $n$ edges. Alternatively, one can try to avoid breaking the chordal condition while manipulating undirected edges. This could be done by either deleting an edge in the cycle formed by $a,c,b,d$ or adding extra chords in the cycles formed between the independent graphs and $a,b,c,d$. In order to delete the edge $a-c$, one first has to delete $n$ edges from $a$ or $c$ to the independent graph to avoid forming a four-cycle. This takes order $n$ moves. In adding chords to avoid making a cycle, a chord has to be added to each vertex in an independent graph, so again order $n$ new edges must be added. Together, this means there is no path between these graphs in $o(n)$ moves of this Markov chain.

Moreover, this example has $5n+4$ vertices and $12n + 5$ edges, well inside the sparsity condition He, Jia, and Yu consider. Namely, that the number of edges being at most a small constant multiple of the number of vertices.

{\em Acknowledgment}. We thank Caroline Uhler and Liam Solus for several helpful discussions on the topic of enumeration of Markov equivalence graphs.

\bibliography{EGWN2}{}
\bibliographystyle{plain}

\end{document}